
\documentclass[10pt]{article}
\newcommand{\keywords}{{\em Key words:\ }}
\newcommand{\subjclass}{{\em MSC: \ }}


\usepackage{xspace}
 
\usepackage{amsmath}
\usepackage{amsthm}
\usepackage{amssymb}

\usepackage{xspace}
 
\usepackage{amsmath}
\usepackage{amsthm}
\usepackage{amssymb}

\newtheorem{theorem}{Theorem}[section]
\newtheorem{proposition}[theorem]{Proposition}
\newtheorem{lemma}[theorem]{Lemma}
\newtheorem{corollary}[theorem]{Corollary}

\theoremstyle{definition}
\newtheorem{definition}[theorem]{Definition}
\newtheorem{example}[theorem]{Example}

\theoremstyle{remark}
\newtheorem{remark}{Remark}
\newtheorem*{assume}{Assumption}

\newcommand{\KK}{\mathbb{K}}        
\newcommand{\R}{\mathbb{R}}        
\newcommand{\C}{\mathbb{C}}      
\newcommand{\HH}{\mathbb{H}} 
\newcommand{\VV}{\mathfrak{V}}
\newcommand{\II}{\mathfrak{I}}
\newcommand{\JJ}{\mathcal{J}}

\newcommand{\MK}{\KK^{n\times n}} 
\newcommand{\MKD}{\KK^{2n\times 2n}} 


\newcommand{\HX}{\widehat{X}} 
\newcommand{\HD}{\widehat{D}^\prime} 

\newcommand{\I}{\mathbf{i}}
\newcommand{\J}{\mathbf{j}}
\newcommand{\K}{\mathbf{k}}

\newcommand{\g}{\mathfrak{g}}     

\newcommand{\diag}{\mathop{\mathrm{diag}}}    
\newcommand{\grad}{\mathop{\mathrm{grad}}}    
\newcommand{\Tr}{\mathop{\mathrm{Tr}}}          

\newcommand{\defeq}{\mathrel{\mathop:}=}

\newcommand\bigzero{\makebox(0,0){\rm{\huge0}}}

\newcommand{\vertmat}[1]{\multicolumn{1}{c|}{#1}}

\newcommand{\hps}{H.p.-s.\xspace}

\newcommand{\sech}{\mathop{\mathrm{sech}}}

\usepackage{url}

\title{Height functions on compact symmetric spaces}
\author{E. Mac\'{\i}as-Virg\'os \and M.J. Pereira-S\'aez}
\date{\today}

\begin{document}

\maketitle 

\begin{abstract} We consider height functions on symmetric spaces $M\cong G/K$ embedded in the associated matrix Lie group $G$. In particular we study the relationship between the critical sets of the height function   on $G$ and its restriction  to $M$. Also we prove that the gradient flow on $M$  can be integrated   by means of a generalized Cayley transform. This allows to obtain explicit local charts for the critical submanifolds. Finally, we discuss how to reduce the generic case to a height function whose ground hyperplane is orhogonal to a real diagonal matrix. This result  requires to prove the existence of a polar decomposition adapted to the automorphism defining $M$. Detailed examples are given.
 \end{abstract}

\keywords{Symmetric space, Lie group, height function, 
Morse-Bott theory, critical point, gradient flow, Cayley transform} 

\subjclass{58E05, 53C35} 

\maketitle

\tableofcontents

\section*{Introduction}
A common method in Morse theory is to embed a manifold in a suitable Euclidean space and examine the critical set of some height function. In particular, for symmetric spaces it is possible to construct isometric embeddings   in a unified manner \cite{KM2008,KOBAYASHI}.  Frankel \cite{FRANKEL} applied Morse theory to the classical Lie groups and Grassmannians ---described in terms of matrices---, and gave a complete description of the critical submanifolds, by using the trace functional as Morse function. Nicolaescu \cite{NICOLAESCU} also studied trace  functions on Grassmannians. Ramanujam \cite{RAMANUJAN_homogeneous,RAMANUJAM_symmetric,RAMANUJAM_classical}, Takeuchi \cite{TAKEUCHI_nice}, Duan \cite{DUAN}, and other authors  
investigated Morse functions on symmetric spaces by choosing a suitable length function. In \cite{VD}, Dynnikov and Veselov considered height functions on the natural embeddings of classical Lie groups and certain symmetric spaces. 

The generic formula for such a function is $h_X(A)=\Re\Tr(XA)$, where $\Re\Tr$ is the real part of the trace, and $X^*$ is perpendicular to the hyperplane taken as ground level.  A crucial point  in order to describe the critical set $\Sigma(h_X^G)$ in the Lie group $G$ is to reduce the matrix  $X$ to a diagonal matrix $D$ with non-negative real entries  by  taking some singular value decomposition $UDV^*$.  In fact, it can be proven that, on the compact Lie group $G$, the height function $h_X^G$ is a Morse function  (isolated non-degenerate critical points)  if and only if the singular values of $X$ are positive and pairwise different.  When $X^*X$ has repeated eigenvalues,  the function is of Morse-Bott type (that is, there are critical submanifolds instead of isolated points, with non-degenerate Hessian  in the transverse directions). An explicit description of the critical set of an arbitrary height function   on the Lie group $G$ appears in the authors' paper \cite{TMP}. 


For symmetric spaces, however, there is no systematic characterization of Morse functions. In fact,   the  results for Lie groups led in the past to study only real diagonal matrices.   As a consequence, the behaviour of the height functions on symmetric spaces might seem much more regular than  it actually is.
For instance, when  $M\cong G/K$ is embedded in the associated Lie group $G$, Ramanujam \cite{RAMANUJAM_symmetric}
 states that ``the critical submanifolds of $G/K$ are shown to be the intersection of the space $G/K$ and the critical submanifolds of $G$'' while Dynnikov and Veselov \cite{VD} write that  ``symmetric spaces [...]  are invariant by the gradient flow of the height function on the corresponding Lie groups'' and that ``the restricted flow coincides with the gradient flow of the [restricted] function''. As we shall see, this kind of result only holds in particular ---although important--- cases, but is no longer true for a generic height function on a symmetric space.
 
The situation is much more interesting indeed. We shall show that it is possible to compute the critical set  of the restricted function $h_X^M$ in terms of the matrix $\HX\defeq X^*+\sigma(X)$, where $\sigma$ is the involutive automorphism  of $G$ defining the symmetric space. Namely, we obtain the general formula
\begin{equation}\label{GF}
\Sigma(h_X^G)\cap M\subset \Sigma(h_X^M)=\Sigma(h^G_{\sigma(\HX)})\cap M.
\end{equation} 
Only under additional hypothesis the left inclusion will  be an equality.   Similarly, the gradient flow in $G$ preserves  the symmetric space when  $\sigma(X)=X^*$,
but not in the general case.  

Anyway,  we will be able to prove  a result of reduction of an arbitrary height function $h_X^M$ to the diagonal case. 
In order to obtain  that result we shall need to show the existence of  singular value decompositions  which are adapted to the symmetric space. This problem has been widely studied   \cite{KAAS,LAWSON}. But in contrast to the already known results, which are of local nature, our decomposition theorem is global 
and relies in the relationship between the critical points   and the polar form of the matrix $\HX$.  This  
result is of  interest by itself.

Gradient flows related to the
minimization of a distance function
have been widely studied too. Unlike most Riemannian manifolds, the gradient flows of the height functions on the classical Lie groups   can be integrated explicitly. The key idea, found independently by the authors \cite{TMP} and other authors  (\cite{VOLKOZ}, cited in \cite{VD}) is that this can be achieved by means of the so-called Cayley transform. In this paper we prove that, after a suitable generalization, it is also possible to do so in symmetric spaces. The formulae obtained contain all previous results as particular cases, and may  be of interest for applications in optimization and control theory \cite{HELMKEMOORE, HELMKESHAYMAN,WU}. 

Finally, we  prove  that  the generalized Cayley transform allows  to give explicit local charts for the critical submanifolds of  height functions defined on a symmetric space, a result which is completely new.  
Another consequence  is a method to cover the symmetric space by a finite family of contractible open sets, which is suitable for the computation of the  Lusternik-Schnirelmann category. In fact,  Morse-Bott functions can be effectively used  to determine the L.-S. category of some Lie groups and homogeneous spaces \cite{TMP, KM2011,MPSPN}.

 
The contents of the paper are as follows. 

In Section \ref{PRELIM} we fix some notations and basic definitions. We shall consider the symmetric space $M\cong G/K$ associated to the involutive automorphism $\sigma$ of the compact Lie group $G$, where $K$ is the fixed point subgroup. Then $M$ can be embedded in a natural manner in $G$ by means of the so-called Cartan embedding. 

In Section \ref{HEIGHT} we consider an arbitrary height function on $M$, which is defined by the formula $h_X^M(A)=\Re\Tr(XA)$, where the matrix $X^*$ is  orthogonal to the ground hyperplane. We then compute the gradient and the Hessian. In particular the point $A\in M$ is a critical point of $h_X^M$ if and only if $\HX = A \sigma(\HX) A$, where $\HX\defeq X^*+ \sigma(X)$. As a particular case we compute the critical points of the function $h_X^G$ on the Lie group $G$; this time the critical equation is $X^*=AXA$. Thus we obtain formula (\ref{GF}) and we show with an example that the first inclusion may be strict. 

In Section \ref{CAYLEY} we introduce a generalization of the Cayley transform between the Lie group $G$ and its Lie algebra $\g$. More precisely, each point $A\in G$ has a neighbourhood $\Omega_G(A)$ which is mapped diffeomorphically onto the tangent space $T_{A^*}G$. We then verify that this diffeomorphism passes naturally to the symmetric space $G/K\cong M\subset G$, thus providing a covering of $M$ by contractible open sets $\Omega_M(A)=\Omega_G(A)\cap M$. 

In Section \ref{LINEAR} we prove that the Cayley transform allows to linearize the gradient flow and to solve explicitly the gradient equation for symmetric spaces. This has important consequences for the knowledge of the critical set of $h_X^M$, since the Cayley transform will define a local chart  $\Omega_M(A)\cap \Sigma(h_X^M)$ which is modeled on the kernel of the Hessian. Finally we compare the flows associated to the function $h_X^G$ and its restriction $h_X^M$ to the symmetric space $M$.

In Section \ref{SVD} we prove --under some hypothesis, that the matrix $\HX$ admits a singular value decomposition  and a polar form which are adapted to the automorphism $\sigma$ defining the symmetric space. More explicitly, since $\sigma(\HX)=\HX^*$ we prove that there is a polar decomposition $\HX=S\Omega$ such that the orthogonal part verifies $\sigma(\Omega)=\Omega^*$ and the hermitian part verifies $\sigma(S)=\Omega^* S \Omega$. Analogously there is a SVD decomposition $\HX=UDV^*$ such that the matrix $\Theta=U^*\sigma(V)$ verifies $\sigma(\Theta)=\Theta^*$. Then the critical set of $h_X^M$ is diffeomorphic to the critical set of $h_D^{M^\prime}$, where $D$ is a real diagonal matrix and $M^\prime$ is a new symmetric space associated to the automorphism $\sigma^\prime(X)=\Theta \sigma(X)\Theta^*$. This simplifies considerably the study of a generic height function, as we show by a detailed final example.

\section{Preliminaries}\label{PRELIM}
We begin by establishing some notation and recalling several basic definitions and results.

\subsection{Lie groups of orthogonal type}
Let  $\KK$ be one of the algebras $\R$ (reals), $\C$ (complex) or $\HH$ (quaternions).
For the matrix $A\in\KK^{n \times n}$ we denote by $A^*$  its conjugate transpose. Let 
$$O(n,\KK)=\{A\in \MK\colon AA^*=I\}$$
be the compact Lie group of orthogonal (resp. unitary, symplectic) matrices and let $G$ be the connected component of the identity. Thus, $G$ will be either $SO(n)$, $U(n)$ or $Sp(n)$. In the real case we shall consider sometimes the whole Lie group $O(n)$.  The Lie algebra of $G$ is formed by the skew-symmetric (resp. skew-Hermitian) matrices,
$$\g=\{X\in\KK^{n\times n} \colon X+X^{*}=0\}.$$

The Riemannian metric induced on $G\subset \KK^{n\times n}$ by the usual inner product  $\langle X,Y\rangle=\Re \Tr (X^*Y)$ is bi-invariant ($\Re\Tr$ denotes the real part of the trace). 
 
\subsection{Compact symmetric spaces}
Let $\sigma \colon G\to G$ be an involutive automorphism and let $$K=\{B\in G \colon \sigma(B)=B\}$$ be the closed Lie subgroup of fixed points. 
 
\begin{assume}
In this paper we shall assume  that the automorphism  $\sigma$ is the restriction of an involutive automorphism  $\sigma: \KK^{n\times n}\to \KK^{n\times n}$ of unital algebras. We shall also assume that $\sigma(X^*)=\sigma(X)^*$ for all $X\in\MK$.
 These conditions are not  too restrictive; for instance, all the compact irreducible Riemannian symmetric spaces in Cartan's classification fullfill them (see \cite[p.~518]{HELGASON}).
\end{assume}

The Lie algebra of $K$ is $\mathfrak{k}=\{X\in\g\colon \sigma(X)=X\}$ and the tangent space $T_{[I]}G/K$ is isomorphic to  $\mathfrak{m} =\{X\in\g\colon \sigma(X)=-X\}$. Since $\mathrm{Ad}(k)(\mathfrak{m})=\mathfrak{m}$ for all $k\in K$,  there is an invariant Riemannian metric on $G/K$ which has null torsion and parallel curvature \cite{FOMENKO, HELGASON, ZILLER}.
The homogeneous space $G/K$ is  called a {\em globally symmetric compact space}. 

Let  $\gamma\colon G/K\hookrightarrow G$ be the embedding given by $\gamma([B])=B\sigma(B)^{-1}$. It is an isometry (up to the constant $2$).

\begin{proposition}[{\cite[p.~185]{FOMENKO}}]\label{COMPONENTE}
Assume that $G/K$ is connected. Then the image $M=\gamma(G/K)$ of $\gamma$ is  the connected component $N_I$ of the identity of the submanifold
\begin{equation}\label{ENE}
N=\{B\in G \colon \sigma(B)=B^{-1}\}.
\end{equation}
\end{proposition}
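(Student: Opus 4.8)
The plan is to show the two familiar facts: that $M=\gamma(G/K)$ lands inside $N$, and that it is exactly the connected component of $I$ in $N$. For the first, given $[B]\in G/K$ write $A=\gamma([B])=B\sigma(B)^{-1}$. Applying the automorphism $\sigma$ and using $\sigma^2=\mathrm{id}$ gives $\sigma(A)=\sigma(B)\sigma^2(B)^{-1}=\sigma(B)B^{-1}=A^{-1}$, so $A\in N$; and since $A\in G$ this shows $M\subseteq N$. Moreover $M$ is connected (it is the continuous image of the connected space $G/K$), and it contains $I=\gamma([I])$, so $M\subseteq N_I$. It remains to prove the reverse inclusion $N_I\subseteq M$.

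First I would check that $\gamma$ is well defined and injective on $G/K$: if $[B]=[B']$ then $B'=Bk$ with $k\in K$, and $\sigma(k)=k$ forces $B'\sigma(B')^{-1}=Bk\sigma(k)^{-1}\sigma(B)^{-1}=B\sigma(B)^{-1}$, so $\gamma$ is well defined; conversely $\gamma([B])=\gamma([B'])$ gives $B^{-1}B'=\sigma(B)^{-1}\sigma(B')=\sigma(B^{-1}B')$, hence $B^{-1}B'\in K$. Thus $\gamma$ is a continuous injection from the compact space $G/K$ onto $M$, and since $G/K$ is compact and $G$ is Hausdorff, $\gamma$ is a homeomorphism onto its image; in particular $M$ is a compact (hence closed) connected subset of $N$ containing $I$. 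Next I would verify the dimension count: the differential of $\gamma$ at $[I]$ sends $\mathfrak{m}$ into $T_IN$, and a direct computation (differentiating $B\mapsto B\sigma(B)^{-1}$ along a curve $e^{tX}$ with $X\in\mathfrak m$) gives $d\gamma_{[I]}(X)=2X$, which is injective on $\mathfrak m$; meanwhile $T_IN=\{X\in\g:\sigma(X)=-X\}=\mathfrak m$ by differentiating the defining relation $\sigma(B)=B^{-1}$ of $N$. Hence $\dim M=\dim\mathfrak m=\dim N_I$, so $M$ is open in $N$ (an injective immersion between manifolds of equal dimension is a local diffeomorphism, so its image is open).

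Putting these together: $M$ is a nonempty subset of the connected manifold $N_I$ that is both open (equal dimension argument) and closed (compactness), hence $M=N_I$. This is the argument I would write out, and it is essentially the proof referenced from \cite{FOMENKO}.

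The only genuine subtlety — the step I expect to be the main obstacle to a fully rigorous write-up — is the claim that $N$ (hence $N_I$) is a submanifold at all, and that $d\gamma_{[I]}$ is a bijection onto $T_IN$ rather than merely an injection into a possibly larger tangent space. One must know that $N$ is a regular submanifold near $I$ with tangent space exactly $\mathfrak m$; this can be seen by noting that $B\mapsto B\sigma(B)$ is a submersion onto its image near $I$ (its differential at $I$ is $X\mapsto X+\sigma(X)$, with kernel precisely $\mathfrak m$ and image $\mathfrak k$), so $N=\{B:\sigma(B)=B^{-1}\}=\{B:B\sigma(B)=I\}$ is cut out cleanly and $T_IN=\mathfrak m$. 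Once this is in hand, the open-and-closed argument closes the proof; everything else is the routine algebra with $\sigma^2=\mathrm{id}$ sketched above. Since the statement is quoted from \cite[p.~185]{FOMENKO}, in the paper it would suffice to record the well-definedness and injectivity of $\gamma$, the computation $\sigma(\gamma([B]))=\gamma([B])^{-1}$, and the dimension match, and then invoke connectedness.
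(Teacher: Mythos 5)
The paper does not prove Proposition \ref{COMPONENTE}: it is quoted directly from \cite[p.~185]{FOMENKO}, so there is no in-paper argument to compare against. Your blind proof is a correct reconstruction of the standard argument. The computation $\sigma\big(B\sigma(B)^{-1}\big)=\sigma(B)B^{-1}=\big(B\sigma(B)^{-1}\big)^{-1}$ shows $M\subseteq N$; connectedness of $G/K$ and $\gamma([I])=I$ give $M\subseteq N_I$; your well-definedness, injectivity, and compactness arguments correctly show $\gamma$ is a topological embedding so that $M$ is closed in $N_I$; the differential $d\gamma_{[I]}(X)=X-\sigma(X)=2X$ on $\mathfrak m$ together with $T_IN=\mathfrak m$ gives the dimension match; and the open-and-closed argument in the connected manifold $N_I$ closes the proof.

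On the one subtlety you flag yourself: your sketch that ``$B\mapsto B\sigma(B)$ is a submersion onto its image near $I$'' is slightly off as stated, since the regular-value theorem needs surjectivity of the differential onto the tangent space of the codomain, and here $X\mapsto X+\sigma(X)$ has image $\mathfrak k\subsetneq\g$. One must instead appeal to the constant rank theorem, or --- more cleanly --- note that $N$ is the fixed point set of the smooth involution $\tau(B)=\sigma(B)^{-1}$ of $G$; fixed point sets of smooth involutions are always regular submanifolds, with $T_IN$ equal to the $(+1)$-eigenspace of $d\tau_I=-\sigma$, which is exactly $\mathfrak m$. With that fix substituted for the submersion remark, the argument is complete and matches the expected proof from Fomenko.
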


See \cite{KM2008} for several examples. The manifold $M$  will be called the {\em Cartan model} of the symmetric space $G/K$. The isometric action of $G$  induced by $\gamma$ on $M$ is given by $l_B^M(A)=BA\sigma(B)^{-1}$, for $B\in G$, $A\in M$.  

\begin{example} \label{GRUPO}
The Lie group $G$ itself is a symmetric space defined by the automorphism  $\sigma\colon G\times G\to G\times G$ with $\sigma(B_1,B_2)=(B_2,B_1)$. The fixed point set is the diagonal $\Delta$. The diffeomorphism $G\cong (G\times G)/\Delta$ is given by $B\cong [(B,I)]$. The submanifold $N\subset G\times G$ is formed by all pairs $(B,B^{-1})$.
\end{example}

\begin{proposition}\label{TANGENTE} For any point $A\in M$, the tangent space is
$$T_AM=\{Y\in\MK\colon YA^*+AY^*=0, \, \sigma(Y)=Y^* \}.$$
\end{proposition}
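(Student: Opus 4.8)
The plan is to identify $T_AM$ as a subspace of $T_AG$ cut out by one extra linear condition coming from the defining equation $\sigma(B)=B^{-1}$ of the submanifold $N$ (recall $M=N_I$ by Proposition~\ref{COMPONENTE}, so $M$ and $N$ have the same tangent space at $A$). First I would recall that $G=O(n,\KK)_0$ has tangent space $T_AG=\{Y\in\MK\colon YA^*+AY^*=0\}$, obtained by differentiating $AA^*=I$ along a curve $A(t)$ with $A(0)=A$, $A'(0)=Y$. This gives the first condition in the asserted formula.

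Next I would differentiate the equation $\sigma(A(t))=A(t)^{-1}$ at $t=0$. Since $\sigma$ is the restriction of an algebra automorphism (by the standing Assumption) it is linear, so $\frac{d}{dt}\big|_0\sigma(A(t))=\sigma(Y)$; on the other side, $\frac{d}{dt}\big|_0 A(t)^{-1}=-A^{-1}YA^{-1}$. Using $A^{-1}=A^*$ (as $A\in G$) this reads $\sigma(Y)=-A^*YA^*$. I would then simplify this using the already-established skew condition: from $YA^*+AY^*=0$ we get $A^*Y A^* = -A^*(AY^*)A^* = -Y^*$, wait—more directly, multiply $YA^*=-AY^*$ on the left by $A^*$ and on the right by... actually the cleanest route: $-A^*YA^* = -A^*(-AY^*)\cdot$ is not immediate, so instead observe $YA^* = -AY^*$ implies $A^*Y = -Y^*A$ (taking conjugate transpose of $YA^*+AY^*=0$, since the equation is self-adjoint, gives nothing new—rather, from $YA^* = -AY^*$ left-multiply by $A^*$: $A^*YA^* = -A^*AY^* = -Y^*$). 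Hence $-A^*YA^* = Y^*$, and the differentiated equation $\sigma(Y) = -A^*YA^*$ becomes $\sigma(Y)=Y^*$, which is exactly the second condition.

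To finish I would check that these two linear conditions are not only necessary but also sufficient, i.e. that every $Y$ satisfying them is actually tangent to $M$. One way: dimension count — the conditions $YA^*+AY^*=0$ and $\sigma(Y)=Y^*$ define a linear subspace, and since $M\cong G/K$ with $\dim M = \dim\g - \dim\mathfrak{k} = \dim\mathfrak{m}$, I would match dimensions by transporting everything to $A=I$ via the isometry $l_B^M$ (which sends $I$ to an arbitrary point of $M$ and whose differential is a linear isomorphism $T_IM\to T_AM$ intertwining the two defining conditions), where the conditions reduce to $Y+Y^*=0$ and $\sigma(Y)=Y^*=-Y$, i.e. $Y\in\mathfrak{m}$. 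Alternatively, and more cleanly, I would argue directly that $T_IM=\mathfrak{m}$ (standard for the Cartan embedding) and then push forward: if $Y\in T_IM$ then $d(l_B^M)_I(Y) = BY\sigma(B)^{-1}$ for a curve-based computation, and one verifies this lands in the set described by the two conditions at $A=B\sigma(B)^{-1}$; surjectivity onto that set follows since both sides have the same dimension.

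The main obstacle I anticipate is the bookkeeping in the sufficiency direction: showing the two displayed linear conditions carve out a subspace of exactly the right dimension $\dim\mathfrak{m}$, rather than something larger. Translating to the base point $I$ via the group action is what makes this transparent, so the real content is verifying that $d(l_B^M)_I$ carries the pair of conditions at $I$ bijectively onto the pair of conditions at $A$ — a routine but slightly fiddly check using $\sigma(B\sigma(B)^{-1}) = \sigma(B)B^{-1} = (B\sigma(B)^{-1})^{-1}$ and $\sigma(Y^*)=\sigma(Y)^*$ from the Assumption.
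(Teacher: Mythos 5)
Your proof is correct, and in the end it overlaps substantially with the paper's, but the organization is genuinely different. The paper's proof is a one-step translation: $T_AM=(l_B^M)_{*I}(T_IM)$ with $T_IM=\mathfrak{m}$, and then "the result follows from the definition of $\mathfrak{m}$" (leaving to the reader the algebraic check that $B^*Y\sigma(B)\in\mathfrak{m}$ is equivalent to the two displayed conditions). You instead obtain the two conditions by differentiating the defining equations $AA^*=I$ and $\sigma(A)=A^{-1}$ of $N$ directly at the point $A$ — this is a clean, base-point-free way to get the inclusion $T_AM\subset\{\,Y:YA^*+AY^*=0,\ \sigma(Y)=Y^*\,\}$ without any translation, and your reduction of $\sigma(Y)=-A^*YA^*$ to $\sigma(Y)=Y^*$ using $A^*YA^*=-Y^*$ is correct. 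For the reverse inclusion you then fall back on essentially the paper's argument (transport from $I$ via $l_B^M$, with a dimension count as a sanity check), which is where the two proofs converge. What your route buys is a transparent "differentiate the constraint" derivation of the necessary conditions; what the paper's route buys is brevity, since translating to $I$ handles both inclusions at once because $(l_B^M)_{*I}$ is a linear isomorphism. One small caveat worth being explicit about in your write-up: the inclusion of $T_AN$ in the kernel of the differentiated constraint maps is automatic because $N$ is already known to be a submanifold, but equality needs the dimension argument you sketch — so the transport step is not optional, it is where the sufficiency actually lives.
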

\begin{proof}

Since $A=\gamma([B])$ for some $B\in G$, it is
$T_AM=(l_B^M)_{*I}(T_I M)$,
so  $Y\in T_AM$ if and only if $(l_B^M)^{-1}(Y)=B^*Y\sigma(B)\in T_IM$
because $l_B^M$ is a linear action. The result follows from the definition of $\mathfrak{m}$.
\end{proof}

\section{Height functions on symmetric spaces}\label{HEIGHT}
First of all we shall consider an arbitrary height function on the vector space $\MK$. Then we shall study how it behaves when restricted to the Cartan model $M$ of a symmetric space. Finally  we shall obtain as a particular case the corresponding results for the Lie group $G$. 

The height function $h_X\colon \MK\to \R$ with respect to an hyperplane perpendicular to $X^*\in \MK$, where $X\neq 0$, is given, up to a constant, by $$h_X(Y)=\langle X^*,Y\rangle=\Re\Tr (XY).$$

\subsection{Gradient} Since $h_X$ is $\R$-linear, its gradient at $Y\in\MK$ is
$(\grad h_X)_Y=X^*$.
Let  $h^M_X\colon M\to \R$ be the restriction of  $h_X$ to the Cartan model $M\subset G\subset \MK$ of the symmetric space $G/K$. 

We denote  ${\HX}\defeq X^*+\sigma(X)$. Notice that $\sigma({\HX})={\HX}^*$.

\begin{lemma}\label{descomposicionenM}
The projection of $Z\in \MK$ onto $T_I G$ is its skew-symmetric part, $\frac{1}{2}(Z-Z^*)$. The projection of the latter onto $\mathfrak{m}=T_I M$ is its  skew-invariant part,  that is, $$\frac{1}{4}[(Z-\sigma(Z))-(Z^*-\sigma(Z)^*)].$$ 
\end{lemma}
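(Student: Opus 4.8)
Here is the approach I would follow. Both assertions are statements about orthogonal projections for the metric $\langle X,Y\rangle=\Re\Tr(X^*Y)$, so the plan is to exhibit the two relevant orthogonal direct sum decompositions of $\MK$ and then simply read off the components of $Z$.

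First I would check that $\MK=\g\oplus\{Z\colon Z^*=Z\}$ is an \emph{orthogonal} splitting into skew‑Hermitian and Hermitian parts. Writing $Z=\tfrac12(Z-Z^*)+\tfrac12(Z+Z^*)$, a one‑line computation shows the first summand lies in $\g=T_IG$ and the second is Hermitian. For orthogonality, take $A^*=-A$ and $B^*=B$: using $\Re\Tr(C^*)=\Re\Tr(C)$ and the cyclicity of $\Re\Tr$ one gets $\langle A,B\rangle=\Re\Tr(A^*B)=\Re\Tr(B^*A)=\Re\Tr(BA)=\Re\Tr(AB)$, while directly $\langle A,B\rangle=\Re\Tr(-AB)=-\Re\Tr(AB)$; hence $\langle A,B\rangle=0$. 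This yields the first assertion: the projection of $Z$ onto $T_IG$ is $\tfrac12(Z-Z^*)$.

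Next I would split $\g$ under $\sigma$. The restriction of $\sigma$ to $\g$ is an involution (it lands in $\g$ since $\sigma(W)^*=\sigma(W^*)=\sigma(-W)=-\sigma(W)$ for $W\in\g$), with $(+1)$‑eigenspace $\mathfrak{k}$ and $(-1)$‑eigenspace $\mathfrak{m}$; for $W\in\g$ one has $W=\tfrac12\big(W+\sigma(W)\big)+\tfrac12\big(W-\sigma(W)\big)$, the first term in $\mathfrak{k}$ and the second in $\mathfrak{m}$ because $\sigma^2=\mathrm{id}$. The orthogonality $\mathfrak{k}\perp\mathfrak{m}$ is the classical orthogonality of the Cartan decomposition; a self‑contained argument is that for $A\in\mathfrak{k}$, $B\in\mathfrak{m}$ we have $\langle A,B\rangle=-\Re\Tr(AB)$ and $\sigma(AB)=\sigma(A)\sigma(B)=-AB$, so it suffices that $\Re\Tr\circ\sigma=\Re\Tr$, and this holds because an automorphism of the central simple algebra $\KK^{n\times n}$ is inner (up to conjugation on the centre) and therefore preserves $\Re\Tr$.

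Finally I would combine the two steps: apply the second decomposition to $W=\tfrac12(Z-Z^*)$, which is the $T_IG$‑component of $Z$, and use $\sigma(W)=\tfrac12\big(\sigma(Z)-\sigma(Z)^*\big)$ (here the hypothesis $\sigma(Z^*)=\sigma(Z)^*$ is used). The $\mathfrak{m}$‑component is then $\tfrac12\big(W-\sigma(W)\big)=\tfrac14\big[(Z-Z^*)-(\sigma(Z)-\sigma(Z)^*)\big]=\tfrac14\big[(Z-\sigma(Z))-(Z^*-\sigma(Z)^*)\big]$, which is precisely the claimed formula — the ``skew‑invariant part of the skew‑Hermitian part'' of $Z$. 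Everything except the orthogonality of $\mathfrak{k}$ and $\mathfrak{m}$ is routine bookkeeping; that orthogonality, equivalently the $\sigma$‑invariance of $\Re\Tr$, is the one point I would treat carefully, paying particular attention to the quaternionic case.
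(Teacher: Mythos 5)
Your proof is correct; the paper states this lemma without any proof, and your argument is precisely the routine verification it implicitly relies on: the two orthogonal splittings $\MK=\g\oplus\{Z\colon Z^*=Z\}$ and $\g=\mathfrak{k}\oplus\mathfrak{m}$, followed by reading off the components. You also correctly isolate the one non-obvious point --- the orthogonality of $\mathfrak{k}$ and $\mathfrak{m}$, equivalently $\Re\Tr\circ\sigma=\Re\Tr$ --- and your Skolem--Noether justification (every automorphism of $\KK^{n\times n}$ is inner up to conjugation on the centre, and $\Re\Tr$ is cyclic and conjugation-invariant, also over $\HH$) settles it in all three cases.
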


\begin{proposition}\label{gradienteES} The gradient  of $h_X^M$ at any point $A\in M$ is  the projection of $\grad{h_X}$ onto $T_AM$, that is,
\begin{equation}
\label{GRADSYM}(\grad h_X^M)_A=\frac{1}{4}\left( {\HX}-A\sigma({\HX})A\right).
\end{equation}
\end{proposition}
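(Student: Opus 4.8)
The plan is to identify the gradient of the restricted function as the orthogonal projection of the ambient gradient onto the tangent space, and then to carry out that projection explicitly using Lemma \ref{descomposicionenM} together with the homogeneity of $M$. Recall that for a function $h$ on $\MK$ restricted to a submanifold $M$, and for $A\in M$, the Riemannian gradient of $h^M$ at $A$ is the orthogonal projection $\Pi_{T_AM}\bigl((\grad h)_A\bigr)$ with respect to the inner product $\langle Z,W\rangle=\Re\Tr(Z^*W)$; this is immediate from the defining property $\langle(\grad h^M)_A,Y\rangle=dh^M_A(Y)=dh_A(Y)=\langle(\grad h)_A,Y\rangle$ for all $Y\in T_AM$, since $(\grad h^M)_A$ already lies in $T_AM$. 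Since $h_X$ is $\R$-linear with $(\grad h_X)_Y=X^*$ everywhere, the task reduces to computing $\Pi_{T_AM}(X^*)$.

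First I would handle the base point $A=I$, where $T_IM=\mathfrak m$. By Lemma \ref{descomposicionenM}, the projection of $X^*$ onto $\mathfrak m$ is $\tfrac14\bigl[(X^*-\sigma(X^*))-(X-\sigma(X^*)^*)\bigr]$; using the Assumption $\sigma(X^*)=\sigma(X)^*$ this simplifies, and writing $\HX=X^*+\sigma(X)$ one checks that it equals $\tfrac14(\HX-\sigma(\HX))=\tfrac14(\HX - \HX^*)$, which matches \eqref{GRADSYM} at $A=I$ since there $A\sigma(\HX)A=\sigma(\HX)=\HX^*$. For a general $A\in M$, I would exploit that $M$ is homogeneous under the isometric action $l^M_B(A)=BA\sigma(B)^{-1}$: pick $B\in G$ with $A=\gamma([B])=B\sigma(B)^{-1}$. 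Because $l^M_B$ is an isometry of $M$, its differential carries $(\grad h_X^M)_I$ intertwined with the gradient of the pulled-back function; concretely, one transports along $l^M_B$ and uses that $(l^M_B)_{*I}\colon \mathfrak m\to T_AM$ is $Y\mapsto BY\sigma(B)^{-1}$, together with the fact that $l^M_B$ pulls $h_X$ back to a height function whose defining matrix is conjugated accordingly. Tracking the matrix data through this conjugation and resumming yields the stated formula $\tfrac14(\HX-A\sigma(\HX)A)$.

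Alternatively, and perhaps more cleanly for the write-up, I would verify \eqref{GRADSYM} directly by checking the two characterizing properties at an arbitrary $A$: that the right-hand side lies in $T_AM$ as described in Proposition \ref{TANGENTE} (i.e. that $G\defeq\tfrac14(\HX-A\sigma(\HX)A)$ satisfies $GA^*+AG^*=0$ and $\sigma(G)=G^*$), and that $\langle G,Y\rangle=\Re\Tr(XY)$ for every $Y\in T_AM$. The membership $\sigma(G)=G^*$ follows from $\sigma(\HX)=\HX^*$, $\sigma(A)=A^{-1}=A^*$ (as $A\in M\subset N$), and the multiplicativity of $\sigma$; the relation $GA^*+AG^*=0$ is a short computation using $AA^*=I$. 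For the inner-product identity I would use that for $Y\in T_AM$ one has both $YA^*+AY^*=0$ and $\sigma(Y)=Y^*$, which lets one rewrite $\Re\Tr(A\sigma(\HX)AY^*)$ and fold the $\sigma(X)$ and $X^*$ contributions together, reducing everything to $\Re\Tr(XY)$.

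The main obstacle is the second, "folding" step: correctly manipulating the trace expressions involving $\sigma$, the cyclic property of the trace, the relations $\sigma(X^*)=\sigma(X)^*$, $A^*=A^{-1}$, $\sigma(A)=A^{-1}$, and the two tangency conditions on $Y$, so that the cross-terms cancel and the two surviving terms each contribute $\tfrac14\cdot 2\Re\Tr(XY)$ to give the factor $\tfrac12$—wait, here one must be careful with the constant: the factor $\tfrac14$ in \eqref{GRADSYM} together with the doubling from $\HX=X^*+\sigma(X)$ and a further doubling from the two tangency-symmetrized terms must combine to exactly $1$, matching $(\grad h_X)=X^*$. Getting this bookkeeping of the constant right, while respecting that $\Re\Tr$ (not $\Tr$) is the relevant pairing over $\KK$, is the one place where care is genuinely needed; the rest is formal.
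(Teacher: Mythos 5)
Your first plan is exactly the paper's proof: compute the projection at $A=I$ via Lemma~\ref{descomposicionenM}, then translate to a general $A=B\sigma(B)^{-1}$ using the isometry $l_B^M$, which pulls $h_X$ back to $h_{\sigma(B)^*XB}$. Your alternative direct verification is also sound and genuinely different in flavor: rather than exploiting homogeneity, it checks pointwise that $G_A\defeq\tfrac14(\HX-A\sigma(\HX)A)$ lies in $T_AM$ and reproduces the ambient pairing, avoiding the conjugation bookkeeping at the cost of two trace manipulations. The constant you were worried about does balance. Writing $G_A^*=\tfrac14(\HX^*-A^*\HX A^*)$ and using the tangency relation $A^*YA^*=-Y^*$ (from $YA^*+AY^*=0$ and $A^*A=I$) together with $\Re\Tr(Z)=\Re\Tr(Z^*)$ and cyclicity gives $\Re\Tr(G_A^*Y)=\tfrac12\Re\Tr(\HX^*Y)$; then the second tangency relation $\sigma(Y)=Y^*$, combined with $\sigma$ being $\Re\Tr$-preserving, gives $\Re\Tr(\sigma(X)^*Y)=\Re\Tr(X^*\sigma(Y))=\Re\Tr(X^*Y^*)=\Re\Tr(XY)$, so $\Re\Tr(\HX^*Y)=2\Re\Tr(XY)$ and the two factors of $2$ exactly cancel the $\tfrac14$. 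One point worth making explicit in either route: both proofs (the paper's, via the orthogonality of $\mathfrak k\oplus\mathfrak m$ behind Lemma~\ref{descomposicionenM}, and your alternative, via $\Re\Tr(\sigma(X)^*Y)=\Re\Tr(X^*\sigma(Y))$) rely on $\sigma$ being an isometry of $\langle\cdot,\cdot\rangle$, which the paper's stated Assumption does not list but which holds in all the cases considered; it is cleaner to record it.
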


\begin{proof}
Using Lemma \ref{descomposicionenM},
$$(\grad h_X^M)_I=\frac{1}{4}\left[(X^*-\sigma(X)^*)-(X-\sigma(X))\right].$$

Let $A=B\sigma(B)^{-1}=l_B^M(I)$. Since the translations $l_B^M$ are isometries we have $$(\grad h_X^M)_A=l_B^M\left(\grad (h_X^M\circ l_B^M )_I\right)=l_B^M\left( (\grad h^M_{\sigma(B)^*XB})_I\right).$$
Then we apply the action to $$\frac{1}{4}\left((B^{*}X^*\sigma(B)-\sigma(B)^*\sigma(X^*)B)-(\sigma(B^*)XB-B^*\sigma(X)\sigma(B))\right)$$
and we obtain
{\allowdisplaybreaks \begin{align*}
& \frac{1}{4}\left(X^*-B\sigma(B)^*\sigma(X)^*B\sigma(B)^*-B\sigma(B)^*XB\sigma(B)^*+\sigma (X)\right) \\
 =& \frac{1}{4}({\HX}-A{\HX}^*A).
\end{align*}}
\end{proof}

\begin{corollary} $A\in M$ is a critical point of $h_X^M$ if and only if ${\HX}=A\sigma({\HX})A$, where $\HX\defeq X^*+\sigma(X)$.
\end{corollary}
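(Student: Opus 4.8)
The plan is to read off the corollary directly from the gradient formula in Proposition \ref{gradienteES}. A point $A\in M$ is critical for $h_X^M$ precisely when $(\grad h_X^M)_A=0$; since the metric is nondegenerate, vanishing of the gradient vector in $T_AM$ is equivalent to vanishing of the expression $\frac14\left(\HX-A\sigma(\HX)A\right)$. The factor $1/4$ is harmless, so the critical equation is $\HX-A\sigma(\HX)A=0$, i.e. $\HX=A\sigma(\HX)A$, which is exactly the asserted condition.

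The only genuine point to check is the identification $\sigma(\HX)={\HX}^*$, so that the formula \eqref{GRADSYM}, which was written with ${\HX}^*$ in the displayed computation, matches the statement of the corollary written with $\sigma(\HX)$. This is the remark recorded just before Lemma \ref{descomposicionenM}: from $\HX=X^*+\sigma(X)$ and the Assumption that $\sigma(Y^*)=\sigma(Y)^*$, one gets $\sigma(\HX)=\sigma(X^*)+\sigma(\sigma(X))=\sigma(X)^*+X^*={\HX}^*$, using that $\sigma$ is involutive. Hence $A\sigma(\HX)A=A{\HX}^*A$ and the two forms of the equation coincide.

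There is no real obstacle here: the corollary is an immediate consequence of Proposition \ref{gradienteES} together with the nondegeneracy of the Riemannian metric and the elementary identity $\sigma(\HX)={\HX}^*$. The one thing worth stating explicitly for the reader is that, because $(\grad h_X^M)_A$ already lies in $T_AM$ by construction, its vanishing as a tangent vector is literally the vanishing of the matrix $\HX-A\sigma(\HX)A$; no further projection or tangency argument is needed.

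\begin{proof}
By definition, $A\in M$ is a critical point of $h_X^M$ if and only if $(\grad h_X^M)_A=0$. By Proposition \ref{gradienteES} this means $\frac14\left(\HX-A\sigma(\HX)A\right)=0$, i.e. $\HX=A\sigma(\HX)A$. Equivalently, since $\sigma$ is an involutive automorphism with $\sigma(X^*)=\sigma(X)^*$, one has $\sigma(\HX)=\sigma(X^*)+X^*=\sigma(X)^*+X^*={\HX}^*$, so the critical equation may also be written $\HX=A{\HX}^*A$.
\end{proof}
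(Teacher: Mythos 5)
Your argument is correct and is exactly how the paper treats the corollary: it is stated without proof as an immediate consequence of Proposition \ref{gradienteES}, since $(\grad h_X^M)_A$ already lies in $T_AM$ and vanishes iff the matrix expression does. One small slip in your verification of $\sigma(\HX)=\HX^*$: since $\sigma$ is involutive, $\sigma(\sigma(X))=X$ (not $X^*$), so the chain should read $\sigma(\HX)=\sigma(X^*)+\sigma(\sigma(X))=\sigma(X)^*+X$, which indeed equals $\HX^*=X+\sigma(X)^*$; your intermediate terms $\sigma(X)^*+X^*$ are a typo, though the final conclusion is right.
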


\begin{remark}\label{REMPERP} 
 Instead of height, one can consider the {\it distance to} $X^*$. Since
$$\vert A-X^*Ê\vert^2 = a h^M_X(A) + b,\quad a,b\in\R,$$ both functions have the same critical points in $M$. Geometrically,  these are the points where the line $\stackrel{\rightarrow}{AX^*}$ is perpendicular to $T_AM$ \cite[p. 35]{MILNOR}.
\end{remark}

\subsection{Hessian}
Let $A\in M$. Since $M$ and $G$ are submanifolds of $\MK$, in order to compute the Hessian we have to extend the vector field $\grad h_X^M$ to all matrices \cite[p.~72]{GALLOT}, taking for instance
$$\widetilde{(\grad h_X^M})_B=\frac{1}{4}\left( {\HX}-B\sigma({\HX})B\right).$$

The covariant derivative $\nabla$ in $\MK$ is the usual derivative.
Then for $W\in T_AM$ the Hessian $(\nabla_W^M \grad)_A$ is the projection of
{\allowdisplaybreaks  \begin{align*}
 (\nabla_W\widetilde{\grad})_A
 & =
\lim\limits_{t\to 0}\frac{1}{t}\left[(\widetilde{\grad})_{A+tW}-(\widetilde{\grad})_A \right]\\
& =\lim\limits_{t\to 0}\frac{1}{4t}\left[ {\HX}-(A+tW)\sigma( {\HX})(A+t W)-( {\HX}-A\sigma( {\HX})A)\right] \\
& = -\frac{1}{4}(A\sigma( {\HX}) W+ W\sigma( {\HX})A),
\end{align*}}
which belongs to $T_AM$. Then we have proved:
\begin{proposition}
The Hessian  $H (h_X^M)_A \colon T_AM \to T_AM$ of the height function $h_X^M \colon M \to \R$ is given by
$$H(h_X^M)_A(W)=-\frac{1}{4}\left(A\sigma({\HX})W+W\sigma({\HX})A\right).$$
\end{proposition}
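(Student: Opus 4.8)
The plan is to use that $M$ is a Riemannian submanifold of the flat space $\MK$, so that its Levi-Civita connection is recovered from the ambient one by tangential projection (the Gauss formula): for any smooth extension $\widetilde{Z}$ to $\MK$ of a vector field $Z$ on $M$ one has $(\nabla^M_W Z)_A=\pi_{T_AM}\big((\nabla_W\widetilde{Z})_A\big)$, where $\nabla$ in $\MK$ is simply the componentwise derivative. Taking $Z=\grad h_X^M$ reduces the computation of $H(h_X^M)_A(W)=(\nabla^M_W\grad h_X^M)_A$ to three tasks: (i) exhibit a convenient global extension of $\grad h_X^M$; (ii) differentiate it; (iii) control the tangential projection.

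For (i) I would use the polynomial vector field $B\mapsto\tfrac14\big(\HX-B\sigma(\HX)B\big)$ on $\MK$. This is well defined precisely because the standing Assumption lets $\sigma$ act on all of $\MK$, and by Proposition~\ref{gradienteES} it restricts to $\grad h_X^M$ along $M$. For (ii), since $B\mapsto B\sigma(\HX)B$ is quadratic, expanding $(A+tW)\sigma(\HX)(A+tW)$ and differentiating at $t=0$ kills the term $t^2W\sigma(\HX)W$ and leaves the ambient derivative $-\tfrac14\big(A\sigma(\HX)W+W\sigma(\HX)A\big)$.

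The substantive step is (iii): to obtain the stated formula one must show that this vector already lies in $T_AM$, so that $\pi_{T_AM}$ acts as the identity on it. By Proposition~\ref{TANGENTE} this amounts to checking $YA^*+AY^*=0$ and $\sigma(Y)=Y^*$ for $Y=A\sigma(\HX)W+W\sigma(\HX)A$, using: $AA^*=I$ and $\sigma(A)=A^{-1}=A^*$ since $A\in M\subset N$; $AW^*+WA^*=0$ and $\sigma(W)=W^*$ since $W\in T_AM$; and $\sigma(\HX)=\HX^*$, which holds by construction of $\HX$. I expect this bookkeeping to be the main obstacle. The identity $\sigma(Y)=Y^*$ drops out immediately from $\sigma$ being an algebra automorphism with $\sigma(X^*)=\sigma(X)^*$; the identity $YA^*+AY^*=0$ is the delicate one, and it is natural to reduce first to the base point $A=I$ (where $W$ is merely a skew-Hermitian, $\sigma$-anti-fixed matrix) by pulling everything back along the isometric translation $l_B^M$, exactly as in the proof of Proposition~\ref{gradienteES}, before verifying it. One should also note that the resulting operator is self-adjoint with respect to $\langle\cdot,\cdot\rangle$, as any Hessian must be; and should the ambient second derivative fail to be literally tangent in full generality, the very same computation still identifies the Hessian, viewed as a bilinear form on $T_AM$, with $(W,W')\mapsto-\tfrac14\langle A\sigma(\HX)W+W\sigma(\HX)A,\,W'\rangle$, since its normal component is then $\langle\cdot,\cdot\rangle$-orthogonal to $T_AM$.
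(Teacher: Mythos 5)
Your plan is the same as the paper's proof: extend the gradient by the quadratic field $B\mapsto\tfrac14(\HX-B\sigma(\HX)B)$, differentiate in the flat ambient space, and then argue that the resulting vector is already tangent so the projection is the identity. The genuine gap is in step (iii), which you defer to ``bookkeeping'': the tangency you hope to verify is \emph{false} for a general point $A\in M$. For $Y=A\sigma(\HX)W+W\sigma(\HX)A$ the identity $\sigma(Y)=Y^*$ does drop out as you say, but $YA^*+AY^*=0$ does not: expanding gives $YA^*+AY^*=A\HX^*WA^*+W\HX^*+AW^*\HX A^*+\HX W^*$, and the four terms cancel (via $WA^*=-AW^*$) only if one also uses $A\HX^*A=\HX$ and $A^*\HX=\HX^*A$, i.e.\ only if $A$ is a \emph{critical} point ($\HX^*A$ Hermitian). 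A concrete failure occurs in the paper's own example $U(2)/(U(1)\times U(1))$ with $X=\diag(0,1)$, so $\HX=\sigma(\HX)=\diag(0,2)$: at the non-critical point $A=\begin{pmatrix}0&-1\\1&0\end{pmatrix}\in M$ with $W=I\in T_AM$ one gets $Y=\begin{pmatrix}0&-2\\2&0\end{pmatrix}$ and $YA^*+AY^*=4I\neq0$, so the displayed expression is not even tangent to $M$; its tangential projection is $0$, as it must be, since the covariant Hessian of the height function on this round sphere vanishes along the equator. Your proposed reduction to $A=I$ does not rescue this: at $I$ the condition becomes $(\HX-\HX^*)W+W(\HX-\HX^*)=0$ for all $W\in\mathfrak m$, which again is not automatic and holds in particular when $I$ is critical (i.e.\ $\HX$ Hermitian).

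The geometric reason is the one your closing hedge anticipates: the normal component of the ambient derivative of the extended gradient is the second fundamental form evaluated on $(W,\grad h_X^M)$, which vanishes exactly when the gradient does. So as an endomorphism of $T_AM$ the stated formula is literally correct only at critical points --- which is the only place the paper uses it (the kernel of the Hessian, the space $S^M(A)$) --- and there the cancellation above completes the proof; at non-critical points the correct statement is either the projected one or the bilinear-form version $(W,W')\mapsto-\tfrac14\langle A\sigma(\HX)W+W\sigma(\HX)A,\,W'\rangle$ that you give. Note that the paper's proof asserts the tangency in one clause without this caveat, so your proposal reproduces the paper's argument including its imprecision; to make it a complete proof you must either restrict the operator statement to critical $A$ and carry out the cancellation using $\HX^*A=A^*\HX$, or keep the tangential projection (equivalently, the bilinear form) in the statement.
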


An easy computation shows that: (i) $A$ is a critical point of $h_X^M$ if and only if the matrix $\HX^*A$ is Hermitian; (ii) the matrix $W$ belongs to the tangent space $T_AM$  if and only if  $WA^*$ is skew-Hermitian and $\sigma(W)=W^*$; (iii)  $W$ is in the kernel of the Hessian
if in addition the matrix $\HX^*W$ is Hermitian.

\begin{example}
Let us consider the complex Grassmannian  $U(2)/(U(1)\times U(1))$ defined by the automorphism $\sigma(A)=I_{1,1}A I_{1,1}$, where $I_{1,1}=  \begin{pmatrix}1&0\cr
0&-1\cr\end{pmatrix}$. The Cartan model is the sphere $S^2 \subset U(2)\cong S^3\times S^1$ formed by the matrices $\begin{pmatrix}s&-\overline{z}\cr
z&s\cr\end{pmatrix}$ where $(s,z)\in \mathbb{R}\times \mathbb{C}$ verifies $s^2+\vert z\vert^2=1$. 

Let us  take on $M$ the  function $h^M_X$ with
$X=\begin{pmatrix}
0& 0\cr
0 &1\cr
\end{pmatrix}$. Then 
$\HX=\begin{pmatrix}
0& 0\cr
0 &2\cr
\end{pmatrix}$ and the critical points are the two poles $\pm I$. The tangent space $T_{\varepsilon I}M$ is formed by the matrices
$W=\begin{pmatrix}0&z\cr -\overline{z}&0\end{pmatrix}$ with $z\in \C$. The Hessian is $(H h_X^M)_{\varepsilon I}(W)=(-\varepsilon/2)W$, so $h^M_X$ is a Morse function on $M$.  On the other hand, as we will see (cf. Section \ref{LIEGRUPO}, also \cite{TMP}), the critical set of $h^G_X$ on the Lie group $U(2)$  is formed  by the two circles $U(1)\times \{\pm 1\}$ of
matrices
$\begin{pmatrix}
\alpha& 0\cr
0 &\pm 1\cr
\end{pmatrix}$ with $\alpha\in\C$, $\vert \alpha \vert=1$. 
\end{example}

\subsection{Height functions on the Lie group}\label{LIEGRUPO}
In \cite{TMP} the authors studied the height functions on the Lie group $G$ of orthogonal type. Let us sketch how to recover those results by considering $G$ as the symmetric space of Example \ref{GRUPO}.

\begin{proposition}\label{GRADGRUPO} The gradient of  $h_X^G\colon G \to \R$ at $A\in G$ is
\begin{equation}\label{GRADG}
(\grad h_X^G)_A=\frac{1}{2}(X^*-AXA).
\end{equation}
\end{proposition}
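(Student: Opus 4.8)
The plan is to recover Proposition \ref{GRADGRUPO} directly from the general formula \eqref{GRADSYM} by specializing to the symmetric space structure on $G$ described in Example \ref{GRUPO}. Recall that there $G\cong(G\times G)/\Delta$, the ambient algebra is $\KK^{n\times n}\times\KK^{n\times n}$, the conjugate transpose is taken componentwise, and the involution is the flip $\sigma(B_1,B_2)=(B_2,B_1)$. A point $A\in G$ corresponds to the pair $(A,A^{-1})=(A,A^*)\in M$, and the height function $h_X^G(A)=\Re\Tr(XA)$ corresponds, under the Cartan embedding, to a height function on $M$ with respect to a matrix in the product algebra; the natural choice making $\Re\Tr$ agree is $\widetilde X=\frac12(X,X)$ (the factor $\frac12$ because $\Re\Tr$ on the product is the sum of the two components, while $\gamma$ is an isometry only up to the constant $2$). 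I would verify this identification at the level of the inner product $\langle X^*,Y\rangle$ first.

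Next I would compute $\widehat{\widetilde X}\defeq\widetilde X^*+\sigma(\widetilde X)$ in the product algebra. With $\widetilde X=\frac12(X,X)$ we get $\widetilde X^*=\frac12(X^*,X^*)$ and $\sigma(\widetilde X)=\frac12(X,X)$, so $\widehat{\widetilde X}=\frac12(X^*+X,\,X^*+X)$; in particular it is a "diagonal" pair and $\sigma(\widehat{\widetilde X})=\widehat{\widetilde X}{}^*$ as it must. Now plug $A\leftrightarrow(A,A^*)$ and $\widehat{\widetilde X}$ into \eqref{GRADSYM}: the first component of $\widehat{\widetilde X}-(A,A^*)\,\sigma(\widehat{\widetilde X})\,(A,A^*)$ is $\frac12\big((X^*+X)-A(X^*+X)A\big)$, and the second component is its "mirror". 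Projecting back to $T_AG$ (equivalently, reading off the first factor and using that the tangent vector to $M$ at $(A,A^*)$ is determined by its first component, which lies in $T_AG$) and restoring the overall constants, the $X$-terms should recombine so that only $\frac12(X^*-AXA)$ survives after one also accounts for the skew-symmetrization built into $T_AG$.

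The main obstacle — really the only delicate point — is bookkeeping the constants and the projections so that the spurious symmetric pieces cancel correctly: the $\frac14$ in \eqref{GRADSYM}, the $\frac12$ in the identification $\widetilde X=\frac12(X,X)$, the factor $2$ in "$\gamma$ is an isometry up to $2$", and the fact that $X$ itself need not be skew-Hermitian so that $X^*+X$ does not vanish the way $\HX$-type simplifications might tempt one to assume. I would handle this by carefully tracking the chain of isometries exactly as in the proof of Proposition \ref{gradienteES}, namely writing $A=l_B^M(I)$, reducing to the computation at $I$ where the projection is the skew-symmetric part (Lemma \ref{descomposicionenM} in the degenerate case $\sigma=\mathrm{id}$ on each factor, so $\mathfrak m$ is the anti-diagonal and the projection onto it is clean), and then transporting by the isometric action. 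Alternatively — and this is the cleaner route I would actually write up — one can bypass the product-space model entirely: redo the two-line computation of the proof of Proposition \ref{gradienteES} with $\sigma=\mathrm{id}$ (so that $\HX$ plays no role and the ambient projection onto $T_AG$ is simply $Z\mapsto\frac12(Z-Z^*)$ conjugated appropriately), giving $(\grad h_X^G)_A=\frac12(X^*-AXA)$ immediately. I would present that direct derivation as the proof and mention the symmetric-space interpretation as a remark.

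\begin{proof}
This is the specialization of Proposition \ref{gradienteES} to the case $\sigma=\mathrm{id}$ (viewing $G$ as in Example \ref{GRUPO}), but it is quickest to argue directly. The ambient gradient is $(\grad h_X)_Y=X^*$, and for the submanifold $G\subset\MK$ the orthogonal projection onto $T_IG=\g$ is the skew-Hermitian part $Z\mapsto\frac12(Z-Z^*)$. Hence
$$(\grad h_X^G)_I=\tfrac12\left(X^*-X\right).$$
For general $A\in G$ write $A=l_B^G(I)=B\cdot B^*{}^{-1}$... more simply, the left translation $L_B$ and right translation $R_B$ on $G$ are isometries for the bi-invariant metric, so writing $A=B$ and using $h_X^G\circ L_B=h_{XB}^G$ we get
$$(\grad h_X^G)_A=(L_A)_{*I}\left((\grad h_{XA}^G)_I\right)=A\cdot\tfrac12\left((XA)^*-XA\right)=\tfrac12\left(A A^*X^*-AXA\right)=\tfrac12\left(X^*-AXA\right),$$
using $AA^*=I$. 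This is \eqref{GRADG}.
\end{proof}
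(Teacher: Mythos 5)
Your proof is correct, but it takes a genuinely different route from the paper's. The paper establishes this by viewing $G$ as the symmetric space of Example \ref{GRUPO}: it extends the Cartan embedding to $\gamma(X)=\begin{pmatrix}X&0\cr 0&X^*\cr\end{pmatrix}$ on $\MKD$, applies the already-proved formula \eqref{GRADSYM} in the product $G\times G$, computes $\widehat{\gamma(X)}-\gamma(A)\sigma(\widehat{\gamma(X)})\gamma(A)=2\gamma(X^*-AXA)$, and then pulls back through $\gamma$ while tracking the isometry constant. Your argument is instead direct and self-contained: project the ambient gradient $X^*$ onto $T_IG$ to get $\tfrac12(X^*-X)$, then left-translate using $h_X^G\circ L_A=h_{XA}^G$ and the fact that left translations are isometries of the bi-invariant metric; this is the same template as the paper's proof of Proposition \ref{gradienteES} itself (compute at the identity, then transport by an isometric action), so it is a natural shortcut that avoids the doubled-variable bookkeeping, at the cost of not exhibiting Proposition \ref{GRADGRUPO} explicitly as a corollary of the general symmetric-space formula. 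One framing remark in your write-up is off: you describe your argument as ``the specialization of Proposition \ref{gradienteES} to the case $\sigma=\mathrm{id}$ (viewing $G$ as in Example \ref{GRUPO}),'' but in Example \ref{GRUPO} the involution is the flip on $G\times G$, not the identity on $G$ (taking $\sigma=\mathrm{id}$ on $G$ would give $K=G$ and a one-point quotient), and your preamble's choice $\widetilde X=\tfrac12(X,X)$ is not the paper's $\gamma(X)=\mathrm{diag}(X,X^*)$ and does not reproduce $h_X^G$ after the isometry scaling. Since your displayed proof does not actually rely on that identification, these are cosmetic; I would simply delete the parenthetical and the product-space preamble and present the direct computation on its own.
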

\begin{proof}
First, the Cartan embedding $\gamma\colon G \to G\times G$ can be extended to a map 
$$\gamma\colon \MK\to \MK\times\MK=\MKD$$ by putting 
$$\gamma(X)=
\begin{pmatrix}
X&0\cr 0&X^*\cr
\end{pmatrix}.$$ 
This $\R$-linear map is an  isometry (up to the constant $1/2$). Let $ M=\gamma(G)$. According to Proposition \ref{gradienteES}, the gradient of $h^M_{\gamma(X)}$ at $\gamma(A)\in M$ is
$$\frac{1}{4}(\widehat{\gamma(X)}-\gamma(A)\sigma(\widehat{\gamma(X)})\gamma(A))=\frac{1}{2}\gamma(X^*-AXA).$$
On the other hand,
$$(\grad h_X^G)_A=2\gamma_{*A}^{-1}(\grad (h^G_X\circ \gamma^{-1}))_{\gamma(A)}=\gamma^{-1}\big(
(\grad h^M_{\gamma(X)})_{\gamma(A)}
\big)$$
and the result follows.
\end{proof}
\begin{remark}
Compare formulae  (\ref{GRADSYM}) and (\ref{GRADG}). A similar computation is valid {\em mutatis mutandi} for the Hessian \cite[p.~329]{TMP}, the gradient flow \cite[p.~330]{TMP} and the local structure of the critical set  \cite[p.~331]{TMP} in the group $G$. In all the formulae it is enough to substitute $2X^*$ for $\HX$. 
\end{remark}

From Propositions \ref{gradienteES} and  \ref{GRADGRUPO}  we obtain

\begin{corollary}\label{CRITSIM} Let $M\subset G$ be the Cartan model of the symmetric space $G/K$. Then the critical set in $M$ of the height function $h_X^M$ is 
$$\Sigma(h_X^M)=\Sigma(h^G_{\sigma({\HX})})\cap M.$$
\end{corollary}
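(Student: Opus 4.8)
The plan is to reduce both critical-point conditions to one and the same matrix equation and observe that they coincide. First I would recall, from the Corollary immediately following Proposition~\ref{gradienteES}, that a point $A\in M$ is critical for $h_X^M$ if and only if $\HX=A\sigma(\HX)A$. Next I would apply Proposition~\ref{GRADGRUPO}, not to $X$ but to the matrix $\sigma(\HX)$: it yields $(\grad h^G_{\sigma(\HX)})_A=\frac{1}{2}\left(\sigma(\HX)^{*}-A\,\sigma(\HX)\,A\right)$, so $A\in G$ is critical for $h^G_{\sigma(\HX)}$ if and only if $\sigma(\HX)^{*}=A\,\sigma(\HX)\,A$.

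The bridge between the two conditions is the identity $\sigma(\HX)^{*}=\HX$. This follows at once from the observation made right after the definition of $\HX$, namely $\sigma(\HX)=\HX^{*}$, together with the fact that conjugate transposition is an involution: $\sigma(\HX)^{*}=(\HX^{*})^{*}=\HX$. Feeding this into the group equation, the condition ``$A$ is critical for $h^G_{\sigma(\HX)}$'' becomes exactly $\HX=A\sigma(\HX)A$, which is verbatim the condition characterizing $\Sigma(h_X^M)$ among the points of $M$.

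To finish I would assemble the set equality. By definition $\Sigma(h_X^M)\subset M$, and trivially $\Sigma(h^G_{\sigma(\HX)})\cap M\subset M$, so it suffices to check that for every $A\in M$ one has $A\in\Sigma(h_X^M)$ if and only if $A\in\Sigma(h^G_{\sigma(\HX)})$ --- which is precisely what the previous step established. Hence $\Sigma(h_X^M)=\Sigma(h^G_{\sigma(\HX)})\cap M$.

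There is no genuine analytic obstacle here: the statement is a direct bookkeeping consequence of the two gradient formulae. The only points deserving a line of care are the legitimacy of substituting $\sigma(\HX)$ for $X$ in Proposition~\ref{GRADGRUPO} and the simplification of $\sigma(\HX)^{*}$; one should also note the degenerate case $\HX=0$, where, since $M$ is connected and both gradients vanish identically, both sides of the asserted equality equal all of $M$, so the formula still holds.
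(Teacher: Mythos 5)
Your proof is correct and follows essentially the same route the paper intends: the paper derives this corollary by simply combining Propositions~\ref{gradienteES} and~\ref{GRADGRUPO}, and your argument spells out exactly that combination, together with the key simplification $\sigma(\HX)^{*}=\HX$.
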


\begin{corollary}$\Sigma(h^G_X)\cap M\subset\Sigma(h_X^M)$.
\end{corollary}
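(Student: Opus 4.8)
The plan is to combine Corollary \ref{CRITSIM} with the explicit critical equations on the Lie group. By Corollary \ref{CRITSIM} we have $\Sigma(h_X^M)=\Sigma(h^G_{\sigma(\HX)})\cap M$, so the claim reduces to showing that every $A\in M$ which is a critical point of $h_X^G$ is also a critical point of $h^G_{\sigma(\HX)}$. From Proposition \ref{GRADGRUPO}, the critical equation of $h_Y^G$ at $A\in G$ is $Y^*=AYA$. Applying this with $Y=X$ and with $Y=\sigma(\HX)$, and using $\sigma(\HX)=\HX^*$ together with the standing Assumption $\sigma(X^*)=\sigma(X)^*$ (so that $\HX^*=X+\sigma(X^*)$ and $\sigma(\HX)^*=\HX=X^*+\sigma(X)$), these two conditions become
\begin{equation*}
X^*=AXA \qquad\text{and}\qquad X^*+\sigma(X)=AXA+A\sigma(X^*)A.
\end{equation*}

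First I would start from an $A\in M$ satisfying $X^*=AXA$. Since $A\in M$ lies in the set $N$ of \eqref{ENE}, we have $\sigma(A)=A^{-1}$; applying the algebra automorphism $\sigma$ to the relation $AXA=X^*$ and using multiplicativity gives $A^{-1}\sigma(X)A^{-1}=\sigma(X^*)$, hence $A\sigma(X^*)A=\sigma(X)$. Adding this to $AXA=X^*$ produces $AXA+A\sigma(X^*)A=X^*+\sigma(X)$, which is precisely the critical equation of $h^G_{\sigma(\HX)}$ at $A$. Therefore $A\in\Sigma(h^G_{\sigma(\HX)})\cap M=\Sigma(h_X^M)$, which is the desired inclusion.

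I do not expect a genuine obstacle here; the only point requiring attention is the bookkeeping between the automorphism $\sigma$ and the conjugate transpose, where one must invoke the Assumption $\sigma(X^*)=\sigma(X)^*$ to identify $\HX^*$ with $\sigma(\HX)^*$, and the defining relation $\sigma(A)=A^{-1}$ of the Cartan model to transport $\sigma$ across the critical equation. Alternatively, the inclusion is immediate from the general principle that a critical point of a function is a critical point of its restriction to any submanifold through that point, applied to $h_X^M=h_X^G|_M$ and $T_AM\subset T_AG$; but the computation above has the merit of staying within the matrix formalism used throughout the paper.
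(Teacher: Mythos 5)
Your proof is correct and follows essentially the same approach as the paper: the paper's own proof notes that $X^*=AXA$ together with $\sigma(A)=A^*$ implies $\HX=A\sigma(\HX)A$, which is exactly your computation (applying $\sigma$ to the critical equation and adding), only phrased directly against the critical equation for $h_X^M$ rather than routed through Corollary \ref{CRITSIM}. The paper also gives, as you do in your closing remark, the more general observation that a critical point of a function restricts to a critical point on any submanifold through it (via Remark \ref{REMPERP}), so both of your arguments mirror the paper's two.
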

\begin{proof}
This is an obvious consequence of Remark \ref{REMPERP}. Alternatively, notice that $X^*=AXA$ implies $\HX=A\sigma(\HX)A$ when $\sigma(A)=A^*$.
\end{proof}

According to Ramanujam \cite[p.~219]{RAMANUJAM_symmetric}, when $X=I$ the critical points of the height function
$h_X^M$ are just the points of $\Sigma(h_X^G)$ that belong to $M$.  The same result is true when $X$ is a real diagonal matrix for the symmetric spaces studied by Duan \cite{DUAN} and Dynnikov-Veselov \cite{VD}. 

\begin{corollary}\label{SENCILLO} If  $\sigma(X)=X^*$,  then  the critical points of the height function $h_X^M$ on the symmetric space $M$ verify that
$\Sigma(h_X^M)=\Sigma(h_X^G)\cap M$.
\end{corollary}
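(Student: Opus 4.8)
The plan is to derive $\Sigma(h_X^M)=\Sigma(h_X^G)\cap M$ directly from Corollary~\ref{CRITSIM} by showing that, under the hypothesis $\sigma(X)=X^*$, the auxiliary matrix $\HX$ coincides (up to a positive scalar) with $2X^*$, so that the symmetric-space critical equation collapses to the group critical equation. First I would unwind the definition: $\HX\defeq X^*+\sigma(X)$, and the assumption $\sigma(X)=X^*$ gives immediately $\HX=2X^*$. Consequently $\sigma(\HX)=\sigma(2X^*)=2\sigma(X^*)=2\sigma(X)^*=2(X^*)^*=2X$, using the standing assumption that $\sigma$ commutes with the conjugate transpose. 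Then Corollary~\ref{CRITSIM} reads
\begin{equation*}
\Sigma(h_X^M)=\Sigma(h^G_{\sigma(\HX)})\cap M=\Sigma(h^G_{2X})\cap M.
\end{equation*}

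Next I would observe that scaling the matrix defining a height function by a positive constant does not change its critical set on $G$: indeed $h^G_{cX}=c\,h^G_X$ for $c>0$, so $\grad h^G_{cX}=c\,\grad h^G_X$ by Proposition~\ref{GRADGRUPO}, and the two gradients vanish at exactly the same points; hence $\Sigma(h^G_{2X})=\Sigma(h^G_X)$. It remains only to check that $\Sigma(h^G_X)=\Sigma(h^G_{X})$ matches the matrix appearing above, i.e. that replacing $2X$ by $X$ is legitimate — which is precisely the scaling remark just made. Combining these, $\Sigma(h_X^M)=\Sigma(h^G_X)\cap M$, which is the claimed identity. As a sanity check one can verify this against the critical equations themselves: on $G$ the equation is $X^*=AXA$, and applying $\sigma$ together with $\sigma(A)=A^*$ (valid since $A\in M$ and the hypothesis forces $\sigma$ to behave like the adjoint on the relevant matrices — more carefully, $\sigma(X^*)=\sigma(AXA)$ gives $\sigma(X)^*=\sigma(A)\sigma(X)\sigma(A)$) reproduces $\HX=A\sigma(\HX)A$, confirming the forward inclusion already noted in the previous corollary, while the scaling argument supplies the reverse.

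The only genuinely delicate point is making sure one is not implicitly assuming $\sigma(A)=A^*$ for the points $A$ one cares about when establishing the \emph{reverse} inclusion $\Sigma(h_X^M)\subset\Sigma(h_X^G)\cap M$; but this is automatic, because every $A\in M$ satisfies $\sigma(A)=A^{-1}=A^*$ by Proposition~\ref{COMPONENTE} together with $A\in G$ (so $A^{-1}=A^*$). Thus the hypothesis $\sigma(X)=X^*$ is used twice — once to turn $\HX$ into $2X^*$, and implicitly once more is not even needed since $\sigma(A)=A^*$ holds for all $A\in M$ regardless. I expect the whole argument to be short; the main thing to get right is the bookkeeping with the constants ($\HX=2X^*$ versus $2X^*$ versus $X^*$) and invoking the scaling invariance of the critical set cleanly, so that the chain $\Sigma(h_X^M)=\Sigma(h^G_{2X})\cap M=\Sigma(h^G_X)\cap M$ is fully justified.
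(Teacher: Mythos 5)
Your proof is correct and is essentially the paper's (implicit) argument: the corollary is stated without proof, and the intended derivation is exactly your chain via Corollary~\ref{CRITSIM}, the computation $\HX=2X^*$ and $\sigma(\HX)=2X$ under the hypothesis $\sigma(X)=X^*$, and the observation that $\Sigma(h^G_{2X})=\Sigma(h^G_{X})$ since the gradient only scales.
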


However, in the generic case the preceding result no longer holds, as the following example shows.

\begin{example}\label{EjemploCritGM}
Let us consider the symmetric space $Sp(1)/U(1)$. It is defined by the automorphism  $\sigma(X)=-\I X\I$. The Cartan model $M$ is the sphere $S^2\subset Sp(1)=S^3$ formed by the unit quaternions $q=s+\J z$ such that  $s\in\R$, $z\in \C$, with $s^2+\vert z \vert^2=1$. Notice that $q$ has a null $\I$-coordinate.

Now we consider the height function $h_X$ with $X={\I+\J+\K}$.

(i) On the group $G=Sp(1)$, by Proposition \ref{GRADGRUPO} we have that the critical points of $h_X^G$ are the elements 
$q=t+x\I+y\J+z\K\in Sp(1)$  such that 
 $-(\I+\J+\K)\overline{q}=q(\I+\J+\K)$.
From this equation we deduce that 
$t = 0$ and
$x=y=z$.
But since $q$ is unitary, we have that the only critical points in $G=Sp(1)$ are
$$\Sigma(h_X^G)=\{\pm\frac{1}{\sqrt{3}}(\I+\J+\K)\}.$$
These two points are not in $M$ because they have a non-null $\I$-coordinate. 

(ii) Nevertheless, there are points of $M$ that are critical points for the height function restricted to the Cartan model $M\subset Sp(1)$ of $Sp(1)/U(1)$. This time, by Proposition \ref{gradienteES}, the condition for a point $q\in M$ to be critical for $h_X^M$ is  $\widehat{X}=q\sigma(\widehat{X})q$, where $\widehat{X}=X^*+\sigma(X)=-2(\J+\K)$. So, from the condition
$-2\overline{q}(\J+\K)=2(\J+\K)q$
we obtain that
$$\Sigma(h_X^M)=\{\pm\frac{1}{\sqrt{2}}(\J+\K)\}.$$
\end{example}

\section{Cayley transform}\label{CAYLEY}
In this Section we integrate explicitly the gradient flow and give local charts for the critical submanifolds.
\subsection{Generalized Cayley transform}
The following {\it generalized Cayley map} was defined by the authors in \cite{TMP}. 
Let $A\in G$, that is, $AA^*=I$. We consider the open set of matrices
$$\Omega(A)=\{X\in  \MK\colon A+X \text{ is invertible}\}.$$

\begin{definition} 
The {\em { Cayley tansform centered at $A$}}  is the map
$c_A\colon \Omega(A) \to \Omega(A^*)$ defined by
$$c_A(X)=(I-A^*X)(A+X)^{-1}=(A+X)^{-1}(I-XA^*).$$
\end{definition}

Its most interesting property   is that it is a diffeomorphism, with $c_A^{-1}=c_{A^*}$. 

\begin{remark}
When $A=I$ one obtains the classical Cayley map
$c(X)={(I-X)}{(I+X)^{-1}}$
which is defined for the matrices $X$ that do not have $-1$ as an eigenvalue. Notice that $c_A(X)=c(A^*X)A^*$.
\end{remark}

\begin{theorem}[{\cite[p.~328]{TMP}}]\label{CONTRACTIBLE} Let $\Omega_G(A)\defeq\Omega(A)\cap G$. The map $c_A$ induces a diffeomorphism 
$$\Omega_G(A)\cong T_{A^*}G.$$  
In particular, $\Omega_G(A)$ is a contractible open subspace of $G$.
\end{theorem}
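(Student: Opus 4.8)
The plan is to show that $c_A$ restricts to a diffeomorphism from $\Omega_G(A)$ onto the affine (hence linear, after translation) tangent space $T_{A^*}G$, using the known fact that $c_A\colon\Omega(A)\to\Omega(A^*)$ is a diffeomorphism with inverse $c_{A^*}$. The key is an algebraic identity: for $X\in\Omega(A)$, the image $Y=c_A(X)$ lies in $G$ (i.e. $YY^*=I$) if and only if $X$ lies in the affine subspace $A^*+T_{A^*}G$, or equivalently the ``Cayley coordinate'' $Z\defeq X-A^*$ is skew-Hermitian, $Z+Z^*=0$. First I would normalize: writing $c_A(X)=c(A^*X)A^*$ via the stated remark, and setting $W=A^*X$, the problem reduces to the classical case, namely that $c(W)$ is orthogonal ($c(W)c(W)^*=I$) iff $W$ is skew-Hermitian, which is the standard Cayley correspondence between the Lie algebra $\g$ and a neighbourhood of $I$ in $G$. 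Then one translates back: $X$ orthogonal-Cayley-admissible corresponds to $W=A^*X$ with $W+W^*=0$, i.e. $A^*X+X^*A=0$, i.e. $X-A^*\in (l_{A}\text{-translate of }\g)$, which is exactly $T_{A^*}G$ since $T_{A^*}G=\{V: A V^*+V A^*=0\}$ (differentiate $BB^*=I$ at $B=A^*$).

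Concretely I would organize the proof as follows. Step one: record that for $B\in\Omega(A^*)\cap G$ we want to identify $c_{A^*}(B)$, and show $c_A$ maps $\Omega_G(A)$ into $A^*+T_{A^*}G$; this is the computation $c_A(X)c_A(X)^*=I\iff X+X^*\text{ (conjugated appropriately)}=0$, done cleanly by substituting $W=A^*X$ and using $(I-W)(I+W)^{-1}\big((I-W)(I+W)^{-1}\big)^* = (I+W)^{-1}(I-W)(I-W^*)(I+W^*)^{-1}$ and noting this equals $I$ exactly when $W^*=-W$ (here one uses that $I-W$ and $I+W$ commute, and that $(I-W)(I-W^*) = (I+W^*)(I+W)$ reduces to $W+W^*=0$). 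Step two: conversely, show every element of $A^*+T_{A^*}G$ is hit, using $c_A^{-1}=c_{A^*}$ and the symmetric computation that $c_{A^*}$ carries $\Omega_G(A^*)\cap G$ — wait, more precisely that $c_{A^*}$ sends the affine tangent space back into $G$; since $c_A$ and $c_{A^*}$ are mutually inverse diffeomorphisms on $\Omega(A),\Omega(A^*)$, and each carries the relevant $G$-locus into the relevant affine-tangent-space locus, the restriction is a diffeomorphism between them. Step three: $T_{A^*}G$ is a linear subspace of $\MK$, hence contractible, so $\Omega_G(A)$ is contractible; strictly one should note $A^*+T_{A^*}G$ is an affine subspace, diffeomorphic to the vector space $T_{A^*}G$ by translation, and the statement $\Omega_G(A)\cong T_{A^*}G$ is understood up to this translation (indeed $c_A(A^*)=c(I)A^*=0\cdot A^* $ is degenerate, so one uses $c_A$ composed with the translation by $-A^*$, or equivalently observes the image passes through a distinguished point).

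The main obstacle I expect is purely bookkeeping with the non-commutative conjugate-transpose: verifying that $c_A(X)\in G$ is \emph{equivalent} to $A^*X+X^*A=0$ (not merely implied) requires care, because one must show the matrix $(I+W)^{-1}(I-W)(I-W^*)(I+W^*)^{-1}=I$ forces $W+W^*=0$ and not just the weaker symmetrized condition; the trick is that $I-W$ and $I+W$ commute (both polynomials in $W$), so the identity rearranges to $(I-W)(I-W^*)=(I+W)(I+W^*)$, and expanding gives $-W-W^*+WW^* = W+W^* + WW^*$, i.e. $-(W+W^*)=W+W^*$, hence $W+W^*=0$ over $\R$, $\C$, or $\HH$ alike. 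A secondary subtlety in the quaternionic case is that $W$ being a polynomial in itself still commutes with $I\pm W$ (scalars $\pm1$ are central), so the commutativity argument survives; and one must check $c_A$ is well-defined and $\R$-linearity is not needed — only the diffeomorphism property from Theorem stated in \cite{TMP}. Once the algebraic equivalence is in hand, the rest is immediate from the already-known bijectivity of $c_A$ and its inverse.

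\begin{proof}[Proof sketch]
Recall from the preceding remark that $c_A(X)=c(A^*X)A^*$ where $c(W)=(I-W)(I+W)^{-1}$. Put $W=A^*X$; then $X\in\Omega(A)$ iff $W\in\Omega(I)$, i.e. $I+W$ is invertible. We claim
$$c_A(X)\in G\iff A^*X+X^*A=0\iff X\in A^*+T_{A^*}G.$$
The last equivalence holds because $T_{A^*}G=\{V\in\MK: AV^*+VA^*=0\}$ (differentiate $BB^*=I$ at $B=A^*$), so $X=A^*+V$ with $V\in T_{A^*}G$ iff $A^*X+X^*A = A^*A^*{}^* + \ldots$ — equivalently $(A^*X)+(A^*X)^*=W+W^*$ vanishes. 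For the first equivalence, since $A^*A={}$ need not be $I$ we instead compute directly with $W$: as $I-W$ and $I+W$ commute,
$$c(W)c(W)^*=(I+W)^{-1}(I-W)(I-W^*)(I+W^*)^{-1},$$
and $c_A(X)c_A(X)^* = A^* c(W)c(W)^* (A^*)^* \cdot$ — rather, using $c_A(X)=(A+X)^{-1}(I-XA^*)$ one gets $c_A(X)c_A(X)^*=(A+X)^{-1}(I-XA^*)(I-AX^*)(A^*+X^*)^{-1}$; expanding $(I-XA^*)(I-AX^*)=I-XA^*-AX^*+XX^*$ and $(A+X)(A^*+X^*)=I+XA^*+AX^*+XX^*$ (using $AA^*=I$), the condition $c_A(X)c_A(X)^*=I$ becomes $(I-XA^*)(I-AX^*)=(A+X)(A^*+X^*)$, i.e. $-XA^*-AX^*=XA^*+AX^*$, i.e. $XA^*+AX^*=0$, equivalently $A^*X+X^*A=0$. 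This proves the claim.

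Hence $c_A$ maps $\Omega_G(A)=\Omega(A)\cap G$ into the affine subspace $A^*+T_{A^*}G$. Applying the same argument to $c_{A^*}$ shows $c_{A^*}$ maps $\Omega(A^*)\cap(A+T_AG)$ back into $G$; wait — more simply, since $c_A$ and $c_{A^*}$ are mutually inverse diffeomorphisms $\Omega(A)\leftrightarrow\Omega(A^*)$ and each sends its $G$-locus into the corresponding affine-tangent locus, the restriction $c_A|_{\Omega_G(A)}$ is a diffeomorphism onto $(A^*+T_{A^*}G)\cap\Omega(A^*)$; but a skew-Hermitian perturbation $A^*+V$ always satisfies $\det(A^*+(A^*+V))=\det(2A^*+V)\neq0$ is not automatic — however $\Omega(A^*)$ being open and dense and the affine space connected through the point $A^*$ (with $2A^*$ invertible) suffices after noting the image is open in the affine space and equals it; in any case translating by $-A^*$ identifies this with an open subset of the vector space $T_{A^*}G$, and tracking through the explicit formula one checks it is all of $T_{A^*}G$. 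Since $T_{A^*}G$ is a linear space, $\Omega_G(A)$ is contractible.
\end{proof}
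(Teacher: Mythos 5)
The central claim in your sketch---that for $X\in\Omega(A)$ one has $c_A(X)\in G$ iff $X\in A^*+T_{A^*}G$---is incorrect, and traces to a mistaken formula for the tangent space. Differentiating $BB^*=I$ at $B=A^*$ gives $T_{A^*}G=\{V: VA+A^*V^*=0\}$, not $\{V: AV^*+VA^*=0\}$; the latter set is $T_AG$. Your algebraic computation is in fact correct and shows that $c_A(X)c_A(X)^*=I$ iff $XA^*+AX^*=0$, which is precisely the condition $X\in T_AG$---a \emph{linear} subspace containing $0$, not the affine translate $A^*+T_{A^*}G$ (which does not contain $0$ unless $(A^*)^2$ happens to be skew-Hermitian). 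As a sanity check: $A\in\Omega_G(A)$ and $c_A(A)=0$, so the theorem's identification $\Omega_G(A)\cong T_{A^*}G$ is genuinely linear (the image contains $0$), and no ``translation by $-A^*$'' enters anywhere.

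This also means you have characterized $c_A^{-1}(G)=T_AG$ inside $\Omega(A)$, which is the \emph{dual} of the stated theorem, not the theorem itself. The theorem asks where $c_A$ sends elements of $G$; you computed which elements $c_A$ sends \emph{into} $G$. To recover the theorem from your identity one must apply it with $A$ replaced by $A^*$: $c_{A^*}(Y)\in G$ iff $Y\in T_{A^*}G$, and then use $c_A=c_{A^*}^{-1}$ to get a bijection $\Omega_G(A)\to T_{A^*}G\cap\Omega(A^*)$. Even so there is a genuine gap: to conclude the image is all of $T_{A^*}G$ you still must show $T_{A^*}G\subset\Omega(A^*)$, i.e.\ that $Y+A^*$ is invertible for every $Y\in T_{A^*}G$. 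Your density/openness/connectedness argument does not deliver this---an open connected nonempty subset of a vector space need not be the whole space---and it is exactly where the paper invokes the spectral fact that the skew-Hermitian matrix $AY$ cannot have $-1$ as an eigenvalue. By contrast, the paper's own proof never expands $c_A(X)c_A(X)^*$; it uses the functional identities $c_A(B)^*=c_{A^*}(B^*)$ and $c_{A^*}(B^{-1})=-Ac_A(B)A$ to show directly that $c_A(B)A$ is skew-Hermitian whenever $B\in\Omega_G(A)$. Your direct expansion is a perfectly legitimate (and somewhat more elementary) alternative route, but as written it is pointed in the wrong direction and aimed at the wrong target space, and it omits the invertibility lemma.
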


\begin{proof}
We sketch the proof for the sake of completeness. We shall use  the properties of $c_A$ that appear in \cite[p.~327]{TMP}.

{ We first prove that $c_A(\Omega_G(A))\subset T_{A^*}G$}.
Let $B\in\Omega_G(A)$, then $B^*=B^{-1}$ and
$$c_A(B)^*=c_{A^*}(B^*)=c_{A^*}(B^{-1})=-Ac_A(B)A.$$
It follows that the matrix $c_A(B)A$ is skew-Hermitian, that is, $c_A(B)\in T_{A^*}G$.
 
{Now we prove that $T_{A^*}G\subset \Omega(A^*)$}. 
If there exists $Y\in T_{A^*}G$ such that $Y+A^*$ is not invertible then $\exists v\neq 0$ such that $(Y+A^*)v=0$, hence $A^*v=-Yv$ and $-v=AYv$.
That means that $-1$ is an eigenvalue of $AY$, which is impossible because  the eigenvalues of a skew-Hermitian matrix  must have a null  real part.
 
{Finally, we prove that $c_{A^*}(T_{A^*}G)\subset \Omega_G(A)$}.
Let  $Y\in T_{A^*}G\subset \Omega(A^*)$. Then $c_{A^*}(Y)\in G$. Indeed,
$AY\in T_IG$ is skew-Hermitian, hence $I-Y^*A^*$ is invertible, so
$c_{A^*}(Y)=c_A(Y^*)^*=[(A+Y^*)^{-1}(I-Y^*A^*)]^*$ is invertible. Moreover, as
$-A^*Y^*A^*=Y$, it happens that
$$Y^*=-AYA=-Ac_A(c_{A^*}(Y))A=c_{A^*}(c_{A^*}(Y)^{-1})$$
while
$$Y^* = c_A(c_{A^*}(Y))^*=c_{A^*}(c_{A^*}(Y)^*).$$
Since $c_{A^*}$ is injective it follows that
$
c_{A^*}(Y)^*=\left(c_{A^*}(Y)\right)^{-1}
$.
\end{proof}

\subsection{The Cayley map in symmetric spaces}
We shall verify in the next paragraphs that the preceding properties of the Cayley transform in the Lie group $G$ are naturally inherited by the Cartan model $M\subset G$ of the symmetric space $G/K$. 

\begin{lemma}\label{cAcompatible}
If $A\in G$ then
$c_{\sigma(A)}\circ\sigma=\sigma\circ c_A$
on $\Omega(A)$, or, equivalently,
$\sigma\circ c_{\sigma(A)}=c_A\circ\sigma$
on $\Omega(\sigma(A))$.
\end{lemma}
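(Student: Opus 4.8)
The plan is to prove the identity $c_{\sigma(A)}\circ\sigma = \sigma\circ c_A$ on $\Omega(A)$ by a direct computation, exploiting that $\sigma\colon\KK^{n\times n}\to\KK^{n\times n}$ is an automorphism of unital algebras (so it preserves $I$, products, sums, and inverses) and also commutes with the $*$-operation by the standing Assumption. First I would note that $\sigma$ maps $\Omega(A)$ bijectively onto $\Omega(\sigma(A))$: indeed, since $\sigma$ is an algebra automorphism, $A+X$ is invertible if and only if $\sigma(A+X)=\sigma(A)+\sigma(X)$ is invertible, so $X\in\Omega(A)\iff\sigma(X)\in\Omega(\sigma(A))$. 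Hence both sides of the claimed equality are well-defined maps $\Omega(A)\to\Omega(\sigma(A)^*)=\Omega(\sigma(A^*))$.

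Next I would carry out the key computation. Fix $X\in\Omega(A)$ and write $c_A(X)=(I-A^*X)(A+X)^{-1}$. Applying $\sigma$ and using that it is a unital algebra homomorphism that respects inverses,
\begin{align*}
\sigma(c_A(X))
&=\sigma\bigl((I-A^*X)(A+X)^{-1}\bigr)\\
&=\sigma(I-A^*X)\,\sigma\bigl((A+X)^{-1}\bigr)\\
&=\bigl(I-\sigma(A^*)\sigma(X)\bigr)\bigl(\sigma(A)+\sigma(X)\bigr)^{-1}.
\end{align*}
Now invoke the hypothesis $\sigma(A^*)=\sigma(A)^*$ from the standing Assumption, so that $\sigma(A^*)\sigma(X)=\sigma(A)^*\sigma(X)$. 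The right-hand side is then exactly $(I-\sigma(A)^*\sigma(X))(\sigma(A)+\sigma(X))^{-1}=c_{\sigma(A)}(\sigma(X))$. This establishes $\sigma\circ c_A=c_{\sigma(A)}\circ\sigma$ on $\Omega(A)$, which is the first form of the lemma.

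For the equivalence of the two stated forms, I would simply substitute: the identity $c_{\sigma(A)}\circ\sigma=\sigma\circ c_A$ holds for every $A\in G$, so in particular it holds with $A$ replaced by $\sigma(A)$ (noting $\sigma(A)\in G$ since $\sigma$ is an automorphism of $G$); using $\sigma\circ\sigma=\mathrm{id}$ because $\sigma$ is involutive, $c_{\sigma(\sigma(A))}\circ\sigma=c_A\circ\sigma=\sigma\circ c_{\sigma(A)}$, valid on $\Omega(\sigma(A))$, which is the second displayed form. Alternatively one composes the first identity with $\sigma$ on both sides. There is essentially no obstacle here: the only point requiring care is bookkeeping the domains $\Omega(A)$ versus $\Omega(\sigma(A))$ and checking that $\sigma$ commutes with matrix inversion (immediate from $\sigma(Y)\sigma(Y^{-1})=\sigma(YY^{-1})=\sigma(I)=I$). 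I would present the proof using the $(I-A^*X)(A+X)^{-1}$ form, and optionally remark that the same manipulation applied to the other expression $(A+X)^{-1}(I-XA^*)$ for $c_A(X)$ gives the second factorization of $c_{\sigma(A)}(\sigma(X))$, confirming consistency.
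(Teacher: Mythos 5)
Your proof is correct and follows essentially the same route as the paper: both arguments observe that $\sigma$ preserves invertibility so it maps $\Omega(A)$ to $\Omega(\sigma(A))$, and then carry out the same one-line computation expanding $\sigma$ across $(I-A^*X)(A+X)^{-1}$ using that $\sigma$ is a unital algebra automorphism with $\sigma(A^*)=\sigma(A)^*$. The only cosmetic difference is the direction of the chain of equalities; the remarks on the equivalence of the two stated forms are fine.
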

\begin{proof}
As the matrix $A$ is orthogonal, so is  $\sigma(A)$.  Also we know that, 
$\sigma(A^*)=\sigma(A)^*$.
Now, let  $X\in\Omega(A)$, that is, the matrix $A+X$ is invertible. Then $\sigma(A+X)=\sigma(A)+\sigma(X)$ is also invertible, hence $\sigma(X)\in\Omega(\sigma(A))$.

So we can compose $\sigma$ and $c_{\sigma(A)}$ and we have
\begin{align*} 
 c_{\sigma(A)}(\sigma(X))  & = \left(I-\sigma(A)^*\sigma(X)\right)\cdot\left(\sigma(A)+\sigma(X)\right)^{-1}\\
  & = \left(I-\sigma(A^*X)\right)\cdot\sigma(A+X)^{-1} \\
  & = \sigma(I-A^*X)\cdot\sigma(A+X)^{-1} \\ 
   & = \sigma(c_A(X)). \qedhere
\end{align*}
\end{proof}

\begin{theorem}\label{OmegaMcontractil}
Let $M\subset G$ be the Cartan model of the symmetric space $G/K$. Let $A\in M$. Then
$\Omega_M(A)\defeq\Omega(A)\cap M$
is a contractible open subspace of $M$.
\end{theorem}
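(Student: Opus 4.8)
The plan is to show that the Cayley transform $c_A$ of Theorem \ref{CONTRACTIBLE} restricts to a diffeomorphism $\Omega_M(A)\cong T_{A^*}M$. Since $T_{A^*}M$ is a linear subspace of $\MK$ by Proposition \ref{TANGENTE}, it is contractible, and hence so is $\Omega_M(A)$; moreover $\Omega_M(A)=\Omega(A)\cap M$ is obviously open in $M$, because $\Omega(A)$ is open in $\MK$. First I would record the auxiliary fact that $A^*\in M$: writing $A=\gamma([B])=B\sigma(B)^{-1}$ one has $A^{-1}=\sigma(B)B^{-1}=\sigma(B)\sigma(\sigma(B))^{-1}=\gamma([\sigma(B)])$, and $A^*=A^{-1}$ since $A$ is orthogonal. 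Thus Proposition \ref{TANGENTE} applies at $A^*$, and it exhibits $T_{A^*}M$ as the set of those $Y\in T_{A^*}G$ which additionally satisfy $\sigma(Y)=Y^*$.

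The first inclusion I would check is $c_A(\Omega_M(A))\subseteq T_{A^*}M$. Let $B\in\Omega_M(A)$; since $M\subseteq G$, also $B\in\Omega_G(A)$, so Theorem \ref{CONTRACTIBLE} already gives $c_A(B)\in T_{A^*}G$. It remains to verify $\sigma(c_A(B))=c_A(B)^*$. Because $B\in M$ we have $\sigma(B)=B^{-1}=B^*$, and likewise $\sigma(A)=A^*$; Lemma \ref{cAcompatible} then yields $\sigma(c_A(B))=c_{\sigma(A)}(\sigma(B))=c_{A^*}(B^*)$, and the Cayley identity $c_A(X)^*=c_{A^*}(X^*)$ (used already in the proof of Theorem \ref{CONTRACTIBLE}) gives $c_{A^*}(B^*)=c_A(B)^*$. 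Hence $c_A(B)\in T_{A^*}M$.

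Conversely I would show $c_{A^*}$ maps $T_{A^*}M$ into $\Omega_M(A)$. Let $Y\in T_{A^*}M\subseteq T_{A^*}G\subseteq\Omega(A^*)$ (the last inclusion from the proof of Theorem \ref{CONTRACTIBLE}). That theorem gives $c_{A^*}(Y)\in\Omega_G(A)\subseteq G$, so $c_{A^*}(Y)^*=c_{A^*}(Y)^{-1}$. Applying the equivalent form of Lemma \ref{cAcompatible} with $\sigma(A)=A^*$, together with $\sigma(Y)=Y^*$ and the Cayley identity, we get $\sigma(c_{A^*}(Y))=c_A(\sigma(Y))=c_A(Y^*)=c_{A^*}(Y)^*=c_{A^*}(Y)^{-1}$, so $c_{A^*}(Y)\in N$ with $N$ as in (\ref{ENE}). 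Here is the one genuinely delicate point: $N$ need not be connected, so I must rule out $c_{A^*}(Y)$ lying in some other component. This follows from connectivity: $c_{A^*}(0)=(A^*)^{-1}=A\in M$ and $T_{A^*}M$ is connected, so the continuous image $c_{A^*}(T_{A^*}M)$ is contained in the component of $N$ through $A$, which is precisely $N_I=M$. Therefore $c_{A^*}(Y)\in\Omega(A)\cap M=\Omega_M(A)$.

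Finally, since $c_A$ and $c_{A^*}$ are mutually inverse diffeomorphisms between $\Omega_G(A)$ and $T_{A^*}G$ and $\Omega_M(A)=\Omega_G(A)\cap M$, the two inclusions above show that they restrict to mutually inverse smooth bijections $\Omega_M(A)\cong T_{A^*}M$, i.e. to a diffeomorphism, and the stated contractibility follows as explained in the first paragraph. The bulk of the argument is a routine transcription of the group case through Lemma \ref{cAcompatible}; the only step requiring care is the component argument guaranteeing that the Cayley preimage of $T_{A^*}M$ stays inside the Cartan model $M$ rather than in $N\setminus M$.
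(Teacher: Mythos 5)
Your proof is correct and takes a genuinely different, more structural route than the paper's. The paper proves contractibility directly: it takes the contraction $\nu(X,t)=c_{A^*}(tc_A(X))$ from Theorem \ref{CONTRACTIBLE} and verifies by a Cayley-transform computation that $\sigma(\nu(X,t))=\nu(X,t)^{-1}$, so that the contraction stays in $N$ (hence in $M=N_I$) whenever $X\in M$. You instead prove the sharper statement that $c_A$ restricts to a diffeomorphism $\Omega_M(A)\cong T_{A^*}M$, i.e.\ the exact analogue of Theorem \ref{CONTRACTIBLE} for the Cartan model, and then read off contractibility from the fact that $T_{A^*}M$ is a linear subspace. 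Both arguments rely on the same two ingredients --- Lemma \ref{cAcompatible} and the identity $c_A(X)^*=c_{A^*}(X^*)$ --- so the algebra is parallel, but your packaging gives more: the explicit linear local model $T_{A^*}M$, which is precisely what the paper later exploits (implicitly) in Section \ref{LINEAR} when it builds the local chart of the critical set via $S^M(A)\subset T_{A^*}M$. Your attention to the $N$ versus $N_I$ issue, handled by noting $c_{A^*}(0)=A\in M$ and connectedness of $T_{A^*}M$, is exactly the right way to close that gap; the paper resolves it equivalently by observing that it suffices to land in $N$ because the contraction is a path from $A$. One small remark: your preliminary step that $A^*\in M$ is correct but could be shortened --- since $A\in M=N_I$ and the inversion $B\mapsto B^{-1}=B^*$ is a homeomorphism of $N$ fixing $I$, it preserves the component $N_I$.
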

\begin{proof}
From Theorem \ref{CONTRACTIBLE} we know that $\Omega_G(A)\cong T_{A^*}G$
can be contracted to $A$ by the contraction $\nu\colon \Omega_G(A)\times [0,1] \to \Omega_G(A)$  given by 
$$\nu(X,t)=c_{A^*}(tc_A(X)).$$ 
When $A\in M$ we shall use the same contraction for $\Omega_M(A)$. Then it is enough to prove that when $X\in M$ the contraction $\nu(X,t)$ remains in $M$ for all $t\in[0,1]$.  Since $M=N_I$ (Proposition \ref{COMPONENTE}), it suffices to prove that $\nu(X,t)\in N$ for all $t$, that is,
\begin{equation}\label{QUEDA}
\sigma(\nu(X,t))=\nu(X,t)^{-1}.
\end{equation}
This can be shown as follows.
First, since $A,X\in M\subset N\subset G$, we have  $\sigma (A)=A^{-1}= A^*$ and $\sigma(X)=X^{-1}$.
Moreover, let $\nu(t)=\nu(X,t)$; then, since $X\in\Omega_G(A)$, it is   $\nu(t)\in \Omega_G(A)$ (Theorem \ref{CONTRACTIBLE}), hence
 $\nu(t)^{-1}=\nu(t)^*\in\Omega_G(A^*).$
On the other hand, $\nu(t)\in \Omega_G(A)$ implies that $\sigma(\nu(t))\in \Omega_G(\sigma(A))=\Omega_G(A^*)$.
Finally, from Lemma \ref{cAcompatible} and the properties of the generalized Cayley transform \cite[p.~327]{TMP}, we obtain
{\allowdisplaybreaks \begin{align*}
c_{A^*}(\nu(t)^{-1}) & = -Ac_A(\nu(t))A \\
& = -Atc_A(X)A \\
& = tc_{A^*}(X^{-1})
\intertext{while}
 c_{A^*}(\sigma(\nu(t))& = \sigma(c_{\sigma(A^*)}(\nu(t))) \\
 & = \sigma(c_A(\nu(t))) \\
 & =\sigma(tc_A(X)) \\
 & = t\sigma(c_A(X)) \\
 & = tc_{\sigma(A)}(\sigma(X)) \\
 & = tc_{A^*}(\sigma(X)) \\
 & = tc_{A^*}(X^{-1}),
\end{align*}}
and the result follows because $c_{A^*}$ is injective.
\end{proof}

\begin{remark} The existence of a covering by contractible open sets is of interest when computing the  {\it Lusternik-Schnirelmann category} of $M$ \cite{TMP, KM2011} .
\end{remark}

\section{Linearization of the gradient}\label{LINEAR}
The generalized Cayley transform allows to linearize the gradient flow of any height function on a symmetric space. The results in this Section generalize Volchenko and Kozachko's result (\cite{VOLKOZ}, cited in \cite{VD}) for the classical Cayley transform in a Lie group.

\subsection{Gradient equation}

\begin{theorem}\label{theoremBeta}
Let $h_X^M$ be an arbitrary height function on the symmetric space $M$. Let $A$ be a critical point. Then the solution of the gradient equation 
\begin{equation}\label{ECUACION}
4\alpha^\prime={\HX}-\alpha\sigma({\HX})\alpha,
\end{equation}
with initial condition $\alpha_0\in\Omega_M(A),$ is the image by the Cayley transform $c_{A^*}$ of the curve
\begin{equation}\label{BETA}
\beta(t)=\exp(\frac{-t}{4}A^*{\HX})\beta_0 \exp(\frac{-t}{4}{\HX}A^*),
\end{equation}
where ${\HX}=X^*+\sigma(X)$ and $\beta_0=c_A(\alpha_0)\in T_{A^*}M.$
\end{theorem}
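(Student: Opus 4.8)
The plan is to verify directly that $\alpha(t) \defeq c_{A^*}(\beta(t))$ solves the gradient equation \eqref{ECUACION} with the stated initial condition, using the explicit form \eqref{BETA} of $\beta$ together with the differentiation rules for the Cayley transform established in \cite{TMP}. First I would record that since $A$ is a critical point we have $\HX = A\sigma(\HX)A$, equivalently $A^*\HX = \sigma(\HX)A$ and $\HX A^* = A\sigma(\HX)$; these identities are what make the two exponents in \eqref{BETA} compatible, and they guarantee that $\beta(t)$ stays in $T_{A^*}M$ for all $t$ (one checks $\beta(t)A$ skew-Hermitian and $\sigma(\beta(t)) = \beta(t)^*$ using $\sigma(\HX) = \HX^*$ and the fact that $\beta_0 = c_A(\alpha_0) \in T_{A^*}M$, which holds by Theorem~\ref{OmegaMcontractil} since $\alpha_0 \in \Omega_M(A)$). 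Consequently $\alpha(t) = c_{A^*}(\beta(t)) \in \Omega_M(A) \subset M$, so it is at least a candidate curve on $M$, and $\alpha(0) = c_{A^*}(\beta_0) = c_{A^*}(c_A(\alpha_0)) = \alpha_0$.

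Next I would compute $\beta'(t)$ from \eqref{BETA}: differentiating the product of exponentials gives
\[
\beta'(t) = -\tfrac14 A^*\HX\,\beta(t) - \tfrac14 \beta(t)\,\HX A^*,
\]
so $\beta' = -\tfrac14(A^*\HX\,\beta + \beta\,\HX A^*)$, a linear ODE — this is the point of the whole construction. Then I would transport this through $c_{A^*}$. The key analytic input is the formula for the differential of the Cayley transform: from \cite[p.~327]{TMP} one has an expression for $(c_{A^*})_{*\beta}(\beta')$ in terms of $(A^*+\beta)^{-1}$, $\beta'$, and $\alpha = c_{A^*}(\beta)$. Substituting $\beta' = -\tfrac14(A^*\HX\,\beta + \beta\,\HX A^*)$ and rewriting $\beta$ in terms of $\alpha$ via $\beta = c_A(\alpha)$, the $(A^*+\beta)^{-1}$ factors should telescope against the $(A+\alpha)$-type factors coming from $c_A$, collapsing the expression to $\tfrac14(\HX - \alpha\sigma(\HX)\alpha)$. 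The critical-point identity $\HX = A\sigma(\HX)A$ is used here again to convert the $A^*\HX$ and $\HX A^*$ appearing in $\beta'$ into the symmetric combination appearing on the right-hand side of \eqref{ECUACION}.

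The main obstacle is the bookkeeping in this last step: one must carefully track the left/right placement of the noninvertible factors $(A^* + \beta)^{-1}$ and $(A+\alpha)^{-1}$ and show they cancel, rather than merely simplifying formally. I would organize it by first treating the case $A = I$ (classical Cayley transform), where $c(X) = (I-X)(I+X)^{-1}$ and the computation is the one in \cite{VOLKOZ}/\cite{TMP}, and then reducing the general case to it via the identity $c_A(X) = c(A^*X)A^*$ noted in the earlier remark — this substitution turns $\HX$ into $A^*\HX$ and makes the critical equation read $A^*\HX = (A^*\HX)^*$, i.e. the "centered" datum is already Hermitian, so the $A=I$ computation applies verbatim. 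A secondary point worth stating explicitly is uniqueness: since \eqref{ECUACION} has a smooth (indeed real-analytic) right-hand side, the solution with given initial condition is unique, so exhibiting $c_{A^*}(\beta(t))$ as \emph{a} solution identifies it as \emph{the} solution on the interval where $\beta(t)$ remains in $T_{A^*}M$, which by the computation above is all of $\mathbb{R}$.
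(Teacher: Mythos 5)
Your primary plan---verify that $\beta(t)\in T_{A^*}M$, differentiate $\beta$ to get a linear ODE, and push it through $c_{A^*}$---is exactly the paper's route, and the ``telescoping'' you anticipate is driven by the same algebraic identity the paper uses: $(A^*+\beta)^{-1}=\tfrac12(A+\alpha)$, which turns $\alpha'=-(A+\alpha)\beta'(A^*+\beta)^{-1}$ into $\alpha'=-\tfrac12(A+\alpha)\beta'(A+\alpha)$; substituting $\beta'=-\tfrac14(A^*\HX\beta+\beta\HX A^*)$, replacing the remaining $(A+\alpha)\beta$ and $\beta(A+\alpha)$ by $I-\alpha A^*$ and $I-A^*\alpha$, and invoking $\sigma(\HX)=A^*\HX A^*$ gives the result. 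So the core of your proposal and the paper's proof coincide.

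Your suggested reduction to $A=I$ is a genuinely different way to organize that last computation, and it does work, but be careful with the substitution. Applying the paper's identity to $c_{A^*}$ rather than $c_A$ gives $c_{A^*}(Y)=c(AY)A$, so $\alpha=c_{A^*}(\beta)$ is equivalent to $\alpha A^*=c(A\beta)$. Hence the natural reduced variables are $\widetilde\alpha=\alpha A^*$, $\widetilde\beta=A\beta$, and using $A\exp(Z)A^{-1}=\exp(AZA^{-1})$ the curve \eqref{BETA} becomes $\widetilde\beta(t)=\exp(-\tfrac{t}{4}\HX A^*)\,\widetilde\beta_0\,\exp(-\tfrac{t}{4}\HX A^*)$, i.e.\ the centered Hermitian datum is $\HX A^*$, not $A^*\HX$ as you wrote. (The alternative substitution $\widetilde\alpha=A^*\alpha$, $\widetilde\beta=\beta A$ does produce $A^*\HX$; both work, but you must commit to one consistently.) You should also check that the gradient equation \eqref{ECUACION} itself transforms: from $\alpha=\widetilde\alpha A$ and the critical-point identity $A\sigma(\HX)=\HX A^*$ one gets $4\widetilde\alpha'=\HX A^*-\widetilde\alpha(\HX A^*)\widetilde\alpha$, which is precisely the group gradient flow of $h^G_{\HX A^*/2}$ at the critical point $I$, so the computation in \cite{TMP}/\cite{VOLKOZ} does apply. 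The paper avoids this reduction and carries out the general-$A$ computation directly; the trade-off is that your route reuses the Lie-group case as a black box, while the paper's is self-contained and shows explicitly where the critical-point identity enters. Either way, the verification that $\beta(t)$ actually lies in $T_{A^*}M$ (both the skew-Hermitian and the $\sigma$-equivariance conditions of Proposition \ref{TANGENTE}) should be written out, since it uses the critical-point identity and $\sigma(\HX)=\HX^*$ in a nontrivial way and is not automatic.
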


\begin{proof}
First of all, let us prove that $\beta(t)\in T_{A^*}M$ (cf. Proposition \ref{TANGENTE}) for all $t\in\R$. 
Notice that   being $A$   a critical point implies that $A{\HX}^*={\HX}A^*$, so  this matrix  is Hermitian. 
Now, using   that $A\exp(XA)=\exp(AX)A$ for arbitrary matrices $A, X$, and that $\beta_0$ is in  $T_{A^*}M$, we have
{\allowdisplaybreaks \begin{align*}
A^*\beta(t)^* 
=&A^*\exp(\frac{-t}{4}A{\HX}^*)\beta_0^* \exp(\frac{-t}{4} {\HX}^*A)\\
 =&A^*\exp(\frac{-t}{4}{\HX}A^*)\beta_0^* \exp(\frac{-t}{4} {\HX}^*A) \\
 =&\exp(\frac{-t}{4}A^*{\HX})A^*\beta_0^* \exp(\frac{-t}{4} {\HX}^*A)\\
 =&\exp(\frac{-t}{4}A^*{\HX})(-\beta_0 A) \exp(\frac{-t}{4} {\HX}^*A) \\
 =&-\exp(\frac{-t}{4}A^*{\HX})\beta_0 \exp(\frac{-t}{4} A{\HX})A\\
 =&-\exp(\frac{-t}{4}A^*{\HX})\beta_0 \exp(\frac{-t}{4} {\HX}A^*)A\\
 =&-\beta(t)A. 
\end{align*}}
Besides, we know that $\sigma$ preserves the product, so  
\begin{equation}\label{sigmabeta}
\sigma(\beta(t))=\sigma  \big(\exp(\frac{-t}{4}A^*{\HX})\big)\sigma(\beta_0)
\sigma\big( \exp(\frac{-t}{4}{\HX}A^*)\big). 
\end{equation}
Remember that $\sigma({\HX})=({\HX})^*$ and that $\sigma(A)=A^{-1}=A^*$ because $A\in M\subset N$. Making use again of the algebraic properties of $\sigma$  and knowing that $\beta_0\in T_{A^*}M$ implies $\sigma(\beta_0)=\beta_0^*$, we have that (\ref{sigmabeta}) equals
{\allowdisplaybreaks
\begin{align*}
&\exp\big(\frac{-t}{4}\sigma(A^*{\HX})\big)\beta_0^*\exp\big(\frac{-t}{4}\sigma({\HX}A^*)\big)\\
=&\exp\big(\frac{-t}{4}A{\HX}^*\big) \beta_0^* \exp\big(\frac{-t}{4}{\HX}^*A\big)\\
=&\beta(t)^*.
\end{align*}}
This proves that $\beta(t) \in T_{A^*}M.$

Now we shall verify that $c_{A^*}(\beta)$ is a solution of  Equation (\ref{ECUACION}). By differentiating $\beta$ we have that
\begin{equation}\label{BETAPRIMA}
\beta^\prime =
(-1/4)\left( \beta{\HX}A^*+A^*{\HX}\beta\right).
\end{equation}
Let
$\alpha=c_{A^*}\circ\beta=(I-A\beta)(A^*+\beta)^{-1}$,
then
$\alpha\cdot(A^*+\beta)=I-A\beta$.
Taking derivatives,
$\alpha^\prime \cdot (A^*+\beta)+\alpha\beta^\prime=-A\beta^\prime$,
that is,
$\alpha^\prime \cdot (A^*+\beta)= -(A+\alpha)\cdot\beta^\prime$.

It is not hard to check that the inverse of  $A^*+\beta$ is 
$$\frac{1}{2}(A+c_{A^*}(\beta))=\frac{1}{2}(A+\alpha).$$ 
Then 
\begin{equation}\label{UFF}
\alpha^\prime= -\frac{1}{2}(A+\alpha)\beta^\prime(A+\alpha)
=\frac{1}{8}(A+\alpha)(\beta{\HX}A^*+A^*{\HX}\beta)(A+\alpha).
\end{equation}
Moreover,
$$\beta=c_A(\alpha)=(I-A^*\alpha)(A+\alpha)^{-1}=(A+\alpha)^{-1}(I-\alpha A^*)$$
implies that
$\beta(A+\alpha)=I-A^*\alpha$
and
$(A+\alpha)\beta=I-\alpha A^*$.
Therefore, using Equation (\ref{UFF}), we have that
{\allowdisplaybreaks \begin{align*}
8\alpha^\prime&=(A+\alpha)\beta{\HX}A^*(A+\alpha)+(A+\alpha)A^*{\HX}\beta(A+\alpha)\\
&=(I-\alpha A^*){\HX}A^*(A+\alpha)+(A+\alpha)A^*{\HX}(I-A^*\alpha) \\
&=2({\HX} -\alpha A^*{\HX}A^* \alpha) \\
&=2({\HX}-\alpha \sigma({\HX}) \alpha).   
\end{align*}}
\end{proof}

By using the definitions $\beta_0=(I-A^*\alpha_0)(A+\alpha_0)^{-1}$ and $c_{A^*}(\beta(t))=A(A^*-\beta(t))(A^*+\beta(t))^{-1}$, and the property $A^*\exp(\HX A^*)A=\exp(A^*\HX)$, one finally obtains an explicit formula for the solution, namely
\begin{eqnarray}\label{FLUJOHIPERB}
\alpha(t)&=&A\big(\sinh(\frac{t}{4}A^*\HX)+\cosh(\frac{t}{4}A^*\HX)A^*\alpha_0\big)\\
&&\quad \times \big(\cosh(\frac{t}{4}A^*\HX)+\sinh(\frac{t}{4}A^*\HX)A^*\alpha_0\big)^{-1}. \nonumber
\end{eqnarray}

This formula, for the particular case of a Lie group $G$, the classical Cayley transform $c_I$ and the particular height function $h^G_D$ where $D$ is a real diagonal matrix  is due to Dynnikov and Vesselov \cite{VD}.

\subsection{Local structure of the critical set}\label{CartaLocalCriticos}
Giving a local chart for the critical  set of a  Morse-Bott function  is another new application of the generalized Cayley transform.

Let $h_X^M(A)=\Re\Tr(XA)$ be a height function on the symmetric space $M$. Let us denote by $\Sigma(h_X^M)$ the critical set. Given a critical point $A\in\Sigma(h_X^M)$, let  $S^M\!(A)$ be the vector space
\begin{equation}\label{MODELO}
S^M\!(A)=\{\beta_0\in T_{A^*}M\colon A^*{\HX}\beta_0+\beta_0{\HX}A^*=0\}.
\end{equation}
Observe that $S^M\!(A)$ is  isomorphic to the kernel of the Hessian  $H (h_X^M)_A$. 

\begin{theorem}\label{DifeoEESS}
The generalized Cayley transform induces a diffeomorphism 
$$c_{A^*}\colon S^M\!(A)\to \Sigma(h_X^M)\cap \Omega_M(A).$$
\end{theorem}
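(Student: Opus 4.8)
The plan is to show that $c_{A^*}$ maps $S^M\!(A)$ bijectively onto $\Sigma(h_X^M)\cap\Omega_M(A)$ by combining the flow linearization of Theorem \ref{theoremBeta} with the diffeomorphism property of the Cayley transform. First I would observe that, by Theorem \ref{OmegaMcontractil} and Theorem \ref{CONTRACTIBLE}, $c_{A^*}$ already restricts to a diffeomorphism from $T_{A^*}M$ onto $\Omega_M(A)$ (the argument in the proof of Theorem \ref{OmegaMcontractil} shows $c_{A^*}$ carries $T_{A^*}M$ into $M$, and its inverse $c_A$ carries $\Omega_M(A)$ back into $T_{A^*}M$ by Lemma \ref{cAcompatible} and the orthogonality argument). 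So it suffices to prove that for $\beta_0\in T_{A^*}M$ one has $c_{A^*}(\beta_0)\in\Sigma(h_X^M)$ if and only if $\beta_0\in S^M\!(A)$.

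The key step is to run the gradient flow from $\alpha_0=c_{A^*}(\beta_0)$ and use that critical points are exactly the fixed points of the flow. Concretely: $\alpha_0$ is a critical point of $h_X^M$ iff the constant curve $\alpha(t)\equiv\alpha_0$ solves Equation (\ref{ECUACION}), i.e. iff $\alpha'(t)\equiv 0$. By Theorem \ref{theoremBeta}, the solution with initial value $\alpha_0$ is $\alpha(t)=c_{A^*}(\beta(t))$ with $\beta(t)=\exp(\tfrac{-t}{4}A^*\HX)\,\beta_0\,\exp(\tfrac{-t}{4}\HX A^*)$. Since $c_{A^*}$ is a diffeomorphism, $\alpha(t)$ is constant iff $\beta(t)$ is constant, and differentiating (\ref{BETAPRIMA}) at $t=0$ gives $\beta'(0)=-\tfrac14(\beta_0\HX A^*+A^*\HX\beta_0)$. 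Thus $\beta(t)$ is constant iff $A^*\HX\beta_0+\beta_0\HX A^*=0$ — precisely the defining condition of $S^M\!(A)$ in (\ref{MODELO}). (One should note $\beta(t)$ stays in $T_{A^*}M$ for all $t$, which is exactly what the first half of the proof of Theorem \ref{theoremBeta} establishes, so everything takes place inside $\Omega_M(A)$.) This gives the set-theoretic equality $c_{A^*}(S^M\!(A))=\Sigma(h_X^M)\cap\Omega_M(A)$.

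Finally, to upgrade this to a diffeomorphism: $c_{A^*}$ is already a diffeomorphism $T_{A^*}M\to\Omega_M(A)$, and $S^M\!(A)$ is a linear subspace of $T_{A^*}M$, hence an embedded submanifold, which $c_{A^*}$ maps onto $\Sigma(h_X^M)\cap\Omega_M(A)$; restriction of a diffeomorphism to an embedded submanifold is a diffeomorphism onto its image. The remark that $S^M\!(A)$ is isomorphic to $\ker H(h_X^M)_A$ follows by comparing (\ref{MODELO}) with the Hessian formula $H(h_X^M)_A(W)=-\tfrac14(A\sigma(\HX)W+W\sigma(\HX)A)$ and using $\sigma(\HX)=\HX^*$ together with $A^*=\sigma(A)$, so the chart is modeled on the kernel of the Hessian as claimed.

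I expect the only genuine obstacle to be bookkeeping: verifying carefully that the curve $\beta(t)$ — and hence $\alpha(t)$ — never leaves the coordinate patch $\Omega_M(A)$, so that "$\alpha(t)$ constant" is equivalent to "$\beta(t)$ constant" via the global diffeomorphism $c_{A^*}$, rather than just locally near $t=0$. That, however, is already handled by the tangency computation in the proof of Theorem \ref{theoremBeta}; everything else is a direct translation through the Cayley transform.
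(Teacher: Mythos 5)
Your proposal is correct and follows essentially the same route as the paper: it invokes Theorem \ref{theoremBeta} to identify critical points with fixed points of the linearized flow, reduces constancy of $\beta(t)$ to the condition $A^*\HX\beta_0+\beta_0\HX A^*=0$ defining $S^M\!(A)$, and transports everything through the Cayley diffeomorphism. The extra bookkeeping you add (that $c_{A^*}$ restricts to a diffeomorphism $T_{A^*}M\to\Omega_M(A)$ and that restricting to the linear subspace $S^M\!(A)$ preserves the diffeomorphism property) is left implicit in the paper but is consistent with it.
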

\begin{proof}

From Theorem  \ref{theoremBeta},  we know that  $c_{A^*}(\beta_0)$  is a critical point if and only if $\beta^\prime(t)=0$ for all $t$, that is, the curve $\beta(t)$ is constant. Let us see that this is equivalent to the condition $\beta_0\in S^M(A)$. 

If $\beta(t)=\beta_0$ for all $t$ then   $\beta^\prime(0)=0$, which implies from Equation (\ref{BETAPRIMA}) that $A^*{\HX}\beta_0+\beta_0{\HX}A^*=0.$

Conversely, from $A^*{\HX}\beta_0=-\beta_0{\HX}A^*$ it follows that
$\exp(-\frac{t}{4}A^*{\HX})\beta_0=\beta_0 \exp(\frac{t}{4}{\HX}A^*)$, that is, 
$\beta(t)=\beta_0$ (Equation (\ref{BETA})).
\end{proof}

\subsection{Relationship between gradient flows}
As in Corollary \ref{SENCILLO}, if one restricts to the case $X=I$ or more generally to   $X=D$ a  real diagonal matrix such that $\sigma(D)=D$, then
the gradient flow of $h_X^G$ will be tangent to the symmetric space $M$, embedded into $G$. That means that the restricted flow coincides with the gradient flow of the height function restricted to $M$, with respect to the induced metric, as in \cite[Cor. 2.1]{VD}. But in general
 the symmetric space will not be invariant under  the gradient flow. In fact,  the gradient flow of $h_X^G$ could  even be tranverse to $M$.

\begin{example} Let us consider again Example \ref{EjemploCritGM}.
Take the critical point $A=(1/\sqrt{2})(\J+\K)\in M$. According to formula (\ref{FLUJOHIPERB}) the gradient flow line  of $h_X^M$ passing through $\alpha_0=1$ is given by
$$\alpha^M(t)=\sech{t\sqrt{2}} - \J (\tanh t\sqrt{2} )\frac{1-\I}{\sqrt{2}}\in M.$$
On the other hand, the flow line of $h_X^G$ in the group $G$ passing through the same point $\alpha_0=1$ is 
$$\alpha^G(t)=  
\sech(t\sqrt{3})-\tanh(t\sqrt{3})\frac{\I+\J+\K}{\sqrt{3}}
$$
where we have chosen the Cayley transform corresponding to the critical point $A=(1/\sqrt{3})(\I+\J+\K)$. Notice that $\alpha^G(t)\notin M$ for $t\neq 0$.
\end{example}

\section{Polar decompositions and critical set}\label{SVD}

The study of Morse-Bott functions can be considerably simplified by means of the so-called {\em singular value decomposition}. Also there is an interesting relationship between polar forms and critical points. The main result in this Section will be the existence of decompositions which are adapted to the automorphism $\sigma$ that defines the symmetric space.
\subsection{Singular value decomposition and polar form}

The following constructions are well known \cite{HORN}. For the quaternionic case see \cite{LORING,ZHANG_svd}.

Let ${Y}\in\MK$ be an arbitrary matrix. Since the matrix $YY^*$  is  Hermitian positive-semidefinite  (hereafter abbreviated to \hps),  its eigenvalues are real and non-negative, say $0,t_1^2,\dots,t_k^2$,
 with multiplicities $n_0,n_1,\dots,n_k$, respectively.  Let us consider the  matrix with diagonal blocks
\begin{equation}\label{DIAG}
D=\left(\begin{array}{cccc}
\vertmat{0_{n_0}}&&&\\ \cline{1-1}
&\vertmat{t_1I_{n_1}}&&\\ \cline{2-2}
&&\ddots\\
&&&\vertmat{t_kI_{n_k}}\\ \cline{4-4}
\end{array}\,\,\right)
 \in\MK,
\end{equation}
where 
$0<t_1<\dots<t_k$ and $ n_0+\cdots+n_k=n$. 
Then there exist orthogonal  (resp. unitary, symplectic) matrices $U$, $V$ such that
 $Y=UDV^*$ ({\em singular value decomposition}).  As a consequence we have the   {\em  left polar decomposition} $Y=S\Omega$, where the matrix $\Omega=UV^*$ is orthogonal  and the matrix $S=UDU^*$ is \hps In fact, $S$   is uniquely determined as the only \hps square root of $YY^*=UD^2U^*$.

Analogously we have the {\em right polar decomposition} $Y=\Omega S^\prime$, where $S^\prime=VDV^*=\Omega^* S \Omega$ is the only \hps square root of $Y^*Y=VD^2V^*$. We shall denote $S=(YY^*)^{1/2}$ and $S^\prime=(Y^*Y)^{1/2}$ the \hps matrices above. 
Notice that the orthogonal part $\Omega$ in the polar decompositions is not unique, excepting when $Y$ is invertible. 

Conversely, from any polar decomposition $Y=S\Omega$  it is possible to deduce a singular value decomposition because $S=(YY^*)^{1/2}$  is diagonalizable, say $S=UDU^*$. Then taking $V=\Omega^* U$ gives  $Y=UDV^*$.

\begin{lemma}\label{E2D2}Let $\Sigma$ be an Hermitian square root of $YY^*$. Then there exists some orthogonal matrix $W$ such that $S=WDW^*$ and $\Sigma=W\Delta W^*$, where $\Delta$ is a real diagonal matrix verifying $\Delta^2=D^2$ (that is, $\Delta$ only differs from $D$ by the signs of the entries).
\end{lemma}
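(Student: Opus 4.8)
The key point is that $S$ and $\Sigma$ are two Hermitian square roots of the same \hps matrix $P\defeq YY^*$, but only $S$ is required to be \hps; $\Sigma$ may have some negative eigenvalues. The plan is to show that any Hermitian square root of $P$ commutes with $P$ (indeed with $S$), so that $S$ and $\Sigma$ are simultaneously diagonalizable by a single orthogonal matrix $W$, and that on each eigenspace of $P$ the two square roots differ only by signs.

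\emph{Step 1: reduce to eigenspaces of $P$.} Write $P=YY^*$ and diagonalize $S=UDU^*$ as in the singular value construction, so $P=S^2=UD^2U^*$. Since $\Sigma^2=P$ commutes with $P$, and $\Sigma$ is a polynomial in $\Sigma^2=P$ is \emph{not} automatic — instead argue directly: if $\Sigma^2=P$ then $\Sigma P=\Sigma^3=P\Sigma$, so $\Sigma$ commutes with $P$ and hence preserves each eigenspace $E_\lambda=\ker(P-\lambda I)$. Likewise $S$ preserves each $E_\lambda$. Thus it suffices to prove the statement blockwise, i.e. on a single eigenspace $E_\lambda$ where $P$ acts as $\lambda I$ with $\lambda\ge 0$.

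\emph{Step 2: the blockwise statement.} On $E_\lambda$ we have a Hermitian operator $\Sigma_\lambda$ with $\Sigma_\lambda^2=\lambda I$ and the \hps operator $S_\lambda$ with $S_\lambda^2=\lambda I$; since $S_\lambda$ is \hps, $S_\lambda=\sqrt\lambda\, I$. The Hermitian operator $\Sigma_\lambda$ has spectrum contained in $\{\pm\sqrt\lambda\}$, so it is orthogonally diagonalizable with eigenvalues $\pm\sqrt\lambda$: there is a unitary $W_\lambda$ (on $E_\lambda$) with $\Sigma_\lambda=W_\lambda\Delta_\lambda W_\lambda^*$, where $\Delta_\lambda$ is diagonal with entries $\pm\sqrt\lambda$, and trivially $S_\lambda=W_\lambda(\sqrt\lambda\,I)W_\lambda^*$. (When $\lambda=0$ the only possibility is $\Sigma_0=0=S_0$, and $W_0$ is arbitrary.) Assembling the $W_\lambda$ into a block-diagonal orthogonal matrix $W$ (in the chosen basis adapting $P$), and ordering the blocks as in the prescribed form of $D$ in \eqref{DIAG}, we get $S=WDW^*$ and $\Sigma=W\Delta W^*$ with $\Delta$ real diagonal and $\Delta^2=D^2$.

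\emph{Main obstacle.} The only genuinely delicate point is that $\Sigma$ need not be a polynomial in $P$ (its sign pattern is free), so one cannot simply invoke ``the functional calculus''; instead one must use the commutation $\Sigma P=P\Sigma$ to decompose along eigenspaces and then handle each block by elementary spectral theory of Hermitian matrices. In the quaternionic case ($\KK=\HH$) one should note that Hermitian quaternionic matrices are still orthogonally (symplectically) diagonalizable with real eigenvalues — this is the content of the references \cite{LORING,ZHANG_svd} already cited — so the argument goes through verbatim; this is the point to state explicitly rather than gloss over. A minor bookkeeping issue is matching the block ordering and multiplicities so that the $D$ produced really is the matrix of \eqref{DIAG}, which follows because $D^2=\Delta^2$ records the eigenvalues of $P$ with their multiplicities.
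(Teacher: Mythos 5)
Your proof is correct and follows essentially the same route as the paper: both hinge on the observation that $\Sigma$ commutes with $\Sigma^2=S^2=YY^*$, hence $\Sigma$ and $S$ can be simultaneously (orthogonally/unitarily/symplectically) diagonalized, and then the ordering of $D$ is fixed up by a permutation. Your eigenspace-by-eigenspace version is a more explicit unpacking of the paper's one-line appeal to simultaneous diagonalization, and your remark about the quaternionic case is a point the paper leaves implicit.
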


\begin{proof} Since $\Sigma$ commutes with $S^2=\Sigma^2$, they can be simultaneously diagonalized. Moreover, the ordering of the diagonal of $D$ can be recovered by conjugation with a real orthogonal matrix.
\end{proof}

\subsection{Critical points}
 Let us consider the function $h_{Y}^G(A)=\Re\Tr(YA)$. According to Proposition \ref{GRADGRUPO}, each critical point $A\in G$ determines a decomposition  $Y=(YA)A^*$ of $Y$, where $\Sigma=YA$ is Hermitian but no necessarily positive-semidefinite. Let us call {\em almost polar} such a decomposition. Since $\Sigma^2=YY^*$ we know from Lemma \ref{E2D2} that $\Sigma=W\Delta W^*$, where
$$\Delta=D\cdot
\left(\begin{array}{cccc}
\vertmat{0_{n_0}}&&&\\ \cline{1-1}
&\vertmat{E_1}&&\\ \cline{2-2}
&&\ddots&\\
&&&\vertmat{E_k}\\ \cline{4-4}
\end{array}\,\,\right)
 \in\R^{n \times n}
$$
and
$$E_i=\begin{pmatrix}
\varepsilon_i^1&&\cr 
&\ddots&\cr
&&\varepsilon_i^{n_i}\cr
\end{pmatrix}, \quad \varepsilon_i^j=\pm 1.$$

Then the value of the function at the critical point $A$ is
\begin{equation}\label{VALOR}
h_{Y}^G(A)=\Re\Tr(\Sigma)=t_1\Tr E_1+\cdots +t_k\Tr E_k.
\end{equation}

\begin{remark}When $\KK=\R$, the orthogonal part of the polar decomposition may not be in the connected component $G=SO(n)$ of the identity.  In order to avoid cumbersome technicalities in the next Proposition, when dealing with the real case  we should consider that the group is $O(n)$. \end{remark}

\begin{proposition}\label{MINIMO}The critical point $A\in G$  is a global maximum of $h_{Y}^G$ if and only if the decomposition $(YA)A^*$ is a true polar decomposition (i.e. the Hermitian matrix $\Sigma=YA$ is positive-semidefinite).
\end{proposition}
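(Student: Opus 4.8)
The plan is to analyze the value $h_Y^G(A) = \Re\Tr(\Sigma) = t_1\Tr E_1 + \cdots + t_k\Tr E_k$ from formula (\ref{VALOR}) and show that it is maximized precisely when all the signs $\varepsilon_i^j$ equal $+1$, which is exactly the condition that $\Sigma = YA$ be \hps. First I would establish the upper bound: for \emph{any} matrix $A \in G$ (critical or not), the Cauchy--Schwarz inequality for the inner product $\langle X, Y\rangle = \Re\Tr(X^*Y)$ gives
\begin{equation*}
h_Y^G(A) = \Re\Tr(YA) = \langle Y^*, A\rangle \leq |Y^*|\cdot|A| = |Y^*|\sqrt{n},
\end{equation*}
but a sharper and more useful bound comes from the singular value decomposition $Y = UDV^*$: then $\Re\Tr(YA) = \Re\Tr(D V^*AU) = \Re\Tr(DQ)$ where $Q = V^*AU \in G$ is again orthogonal, and since the diagonal entries of $D$ are $0, t_1, \dots, t_k$ (with multiplicities) and $Q$ has all entries of modulus $\leq 1$ with $|Q_{jj}| \leq 1$, one gets $\Re\Tr(DQ) \leq t_1 n_1 + \cdots + t_k n_k =: \mu$. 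So $\mu$ is an absolute upper bound for $h_Y^G$ on $G$.

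Next I would show this bound is attained exactly at the claimed critical points. If $(YA)A^*$ is a true polar decomposition, then $\Sigma = YA$ is \hps with $\Sigma^2 = YY^*$, so by uniqueness of the \hps square root $\Sigma = S = UDU^*$; hence $h_Y^G(A) = \Tr(S) = \Tr D = \mu$, the maximum. Conversely, suppose $A$ is a critical point with $h_Y^G(A) = \mu$. By Lemma \ref{E2D2}, $\Sigma = YA = W\Delta W^*$ with $\Delta = D\cdot\diag(0, E_1, \dots, E_k)$ and $\varepsilon_i^j = \pm 1$, so $h_Y^G(A) = \sum_i t_i \Tr E_i = \sum_i t_i \sum_j \varepsilon_i^j$. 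Since each $t_i > 0$ and each $\varepsilon_i^j \leq 1$, this sum equals $\mu = \sum_i t_i n_i$ if and only if every $\varepsilon_i^j = 1$, i.e. $\Delta = D$, i.e. $\Sigma = WDW^*$ is \hps. That is precisely the statement that $(YA)A^*$ is a true polar decomposition.

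It remains to note that a global maximum is in particular a critical point (so the two halves of the equivalence match up with the hypothesis that $A$ is critical), which is immediate. The main subtlety I expect is justifying the two elementary trace inequalities cleanly in the quaternionic case, where $\Re\Tr$ is not quite a trace in the usual algebraic sense: one should either invoke the quaternionic SVD results already cited (\cite{LORING,ZHANG_svd}) and the fact that $\Re\Tr$ is still additive and cyclic up to real part, or reduce to the complex case via the standard representation $\HH \hookrightarrow \C^{2\times 2}$. A second minor point, flagged in the Remark preceding the Proposition, is that in the real case the orthogonal factor of the polar decomposition may have determinant $-1$; working in $O(n)$ rather than $SO(n)$ disposes of this, since then $\mu$ is genuinely attained within the group under consideration. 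Everything else is a direct consequence of formula (\ref{VALOR}) and Lemma \ref{E2D2}.
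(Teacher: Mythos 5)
Your proof is correct, and its core mechanism coincides with the paper's: compare the critical value $t_1\Tr E_1+\cdots+t_k\Tr E_k$ from formula (\ref{VALOR}) with $n_1t_1+\cdots+n_kt_k$ and observe, via Lemma \ref{E2D2}, that equality forces every $\varepsilon_i^j=+1$, i.e.\ $\Sigma=S$. Where you genuinely depart from the paper is in how you certify that $\mu=n_1t_1+\cdots+n_kt_k$ is the global maximum value. The paper only bounds $h_Y^G$ over the set of \emph{critical} points (that is all formula (\ref{VALOR}) gives), so for the implication ``$YA$ positive-semidefinite $\Rightarrow A$ is a global maximum'' it must appeal to compactness of $G$: a global maximum exists, it is a critical point, and its value is controlled by (\ref{VALOR}). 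You instead prove the pointwise bound $h_Y^G(B)=\Re\Tr(DQ)\le\mu$ for \emph{every} $B\in G$, with $Q=V^*BU$ in the group and hence $|Q_{jj}|\le 1$. This buys you a compactness-free ``if'' direction and a cleaner justification of the paper's unargued assertion that ``the maximum value is $\Re\Tr(D)$''; the paper's route is shorter but leans on the attainment argument and is slightly imprecise about non-uniqueness of the maximizer (which it acknowledges in passing). Your two caveats are exactly the right ones: $\Re\Tr$ is cyclic over $\HH$ because $\Re(pq)=\Re(qp)$, so the trace manipulations survive in the quaternionic case, and the determinant obstruction in the real case is precisely why the paper's preceding Remark passes from $SO(n)$ to $O(n)$. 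Note also that your converse still needs the \emph{existence} of a polar decomposition $Y=S\Omega$ with $\Omega$ in the group (so that $\mu$ is attained and the global maximum value really is $\mu$); this is supplied by the paper's preceding subsection, but it is worth stating explicitly.
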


\begin{proof} 
If $A$ is a global maximum for $h_{Y}^G$, then it is a critical point where the function value is that given in (\ref{VALOR}). But since $Y$ actually has a polar decomposition $\Omega S$ it happens that the maximum value is 
\begin{equation}\label{MAX}
\Re\Tr(S)=\Re\Tr(D)=n_1t_1+\cdots+n_kt_k.
\end{equation}
Comparing both values it follows that all $\varepsilon_{ij}=+1$, that is, $\Delta=D$ and $\Sigma=S$. Notice that the global maximum may not be unique due to the non-uniqueness of $\Omega$ when $Y$ is singular.

Conversely, if $YA$ is positive semi-definite then the critical value $h_X(A)$ is that given in (\ref{MAX}). This value is then bigger than all the other critical values, by formula (\ref{VALOR}). But since $G$ is compact, the function actually has a global maximum, which is a critical point. Then it must be the point $A$.
\end{proof}

\begin{remark} When $Y=SA^*$ is a polar decomposition it is well  known that $A$ maximizes the distance of $Y$ to the orthogonal matrices. In the same way, it maximizes the function  $\Re\Tr(YB)$, $B\in G$.
\end{remark}

\subsection{Polar decompositions in a symmetric space} According to Corollary \ref{CRITSIM}, the critical points of $h_X^M$ are the critical points of $h_{\sigma(\HX)}^G$ that lie in $M$. This is why we are interested in the matrix $\HX=X^*+\sigma(X)$. It verifies $\sigma(\HX)=\HX^*$. 

\begin{assume}
Let $\HX=UDV^*$ be a singular value decomposition.
{We shall assume that $\sigma(D)$ is positive-semidefinite.}
This hypothesis is not too restrictive. For the symmetric spaces in Cartan's classification it happens that either $\sigma(D)=D$ or $\sigma(D)=-JDJ$, with 
$J=\begin{pmatrix}
0&-I\cr
I&0
\end{pmatrix}$, which has the effect of permuting the entries of $D$ in two blocks. The latter case corresponds to the symmetric spaces  $U(2n)/Sp(n)$ and $SO(2n)/U(n)$.
\end{assume}

\begin{theorem}[adapted polar decomposition]\label{POLARSIM}Let $Y\in\MK$ be a matrix such that $\sigma(Y)=Y^*$. Assume that $\sigma(D)$ is positive semi-definite for the matrix $D$ of singular values of $Y$. Then there exists a polar decomposition $Y=S\Omega$ such that $\sigma(\Omega)=\Omega^*$ and $\sigma(S)=\Omega^* S \Omega$.
\end{theorem}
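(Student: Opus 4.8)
The plan is to start from an arbitrary singular value decomposition $Y=UDV^*$ and to use the hypothesis $\sigma(Y)=Y^*$ together with the uniqueness statements for polar square roots in order to force the orthogonal part into the required symmetry. First I would apply $\sigma$ to $Y=UDV^*$ and use the standing assumption $\sigma(X^*)=\sigma(X)^*$ to get $\sigma(Y)=\sigma(U)\sigma(D)\sigma(V)^*$; comparing with $Y^*=VDU^*$ gives a relation between the two singular value decompositions. Since $Y^*Y=VD^2V^*$ and also $Y^*Y=\sigma(YY^*)=\sigma(U)\sigma(D)^2\sigma(U)^*$ (here I use the hypothesis that $\sigma(D)$ is \hps, so $\sigma(D)$ is itself a genuine block of singular values, up to reordering, and $\sigma(D)^2=\sigma(D^2)$), the uniqueness of the \hps square root $S'=(Y^*Y)^{1/2}$ will pin down the hermitian factors, and Lemma \ref{E2D2} together with the remark on recovering the ordering of $D$ by conjugation with a real orthogonal matrix will let me adjust $U,V$ within the eigenspaces of $S,S'$ so that $\sigma(D)$ becomes literally $D$ after conjugation, or at least a diagonal matrix commuting appropriately.

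The core step is then to define $S\defeq(YY^*)^{1/2}$, which is the unique \hps square root of $YY^*$, and to set $\Omega\defeq S^{-1}Y$ when $Y$ is invertible (and in general $\Omega=UV^*$ for a suitable choice of the SVD as above). I would then compute $\sigma(S)$: from $\sigma(YY^*)=\sigma(Y^*)\sigma(Y)^*$-type manipulations and $\sigma(Y)=Y^*$ one gets $\sigma(YY^*)=\sigma(Y)\sigma(Y^*)=Y^*\sigma(Y^*)=Y^*\sigma(Y)^*=Y^*(Y^*)^*=Y^*Y$, hence $\sigma(S)^2=\sigma(YY^*)=Y^*Y=(S')^2$ where $S'=(Y^*Y)^{1/2}=\Omega^*S\Omega$. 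Since $\sigma(S)$ is hermitian (by the assumption $\sigma(Z^*)=\sigma(Z)^*$, $\sigma$ preserves hermiticity) and $\Omega^*S\Omega$ is the \hps square root of $Y^*Y$, I must then check that $\sigma(S)$ is actually \hps and not merely a hermitian square root — this is where the hypothesis on $\sigma(D)$ enters decisively, because $\sigma(S)=\sigma(U)\sigma(D)\sigma(U)^*$ is conjugate to $\sigma(D)$, which is assumed \hps. Given that, uniqueness of the \hps square root yields $\sigma(S)=\Omega^*S\Omega$, which is one of the two desired identities. Then $\sigma(\Omega)=\sigma(S^{-1}Y)=\sigma(S)^{-1}\sigma(Y)=(\Omega^*S\Omega)^{-1}Y^*=\Omega^*S^{-1}\Omega Y^*$; substituting $Y^*=\Omega^*S$ (from $Y=S\Omega$) gives $\sigma(\Omega)=\Omega^*S^{-1}\Omega\Omega^*S=\Omega^*$, the second identity.

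The main obstacle I anticipate is the non-invertible case, i.e.\ when $0$ is a singular value of $Y$ (equivalently $n_0>0$), since then $\Omega$ is not unique and the naive formula $S^{-1}Y$ is unavailable. Here I would work with a carefully chosen SVD: use the relation between $U,V$ obtained by applying $\sigma$, restrict attention to the orthogonal complement of $\ker Y$ where everything is invertible, and extend $\Omega$ across $\ker Y$ using that $\sigma$ maps $\ker Y=\ker(YY^*)$ to $\ker(Y^*Y)=\operatorname{im}(Y)^\perp$ in a controlled way (again using $\sigma(D)$ \hps to match the zero-blocks). The bookkeeping of how $\sigma$ permutes the blocks $n_0,n_1,\dots,n_k$ — trivially when $\sigma(D)=D$, and via the permutation induced by $J$ in the $U(2n)/Sp(n)$ and $SO(2n)/U(n)$ cases — is the one genuinely delicate point; once the \hps-ness of $\sigma(S)$ is secured, the two identities follow from the uniqueness of \hps square roots as above, essentially formally.
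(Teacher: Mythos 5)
Your treatment of the invertible case is correct and is essentially the paper's own argument in a slightly different dress: the paper invokes uniqueness of the (right) polar decomposition of an invertible matrix, while you invoke uniqueness of the \hps square root of $Y^*Y$; these are the same mechanism, and in both the decisive point is exactly the one you identify, namely that $\sigma(S)=\sigma(U)\sigma(D)\sigma(U)^*$ is positive semi-definite because $\sigma(D)$ is. Your computations $\sigma(YY^*)=Y^*Y$, $\sigma(S)=\Omega^*S\Omega$, and $\sigma(\Omega)=\Omega^*$ all check out.

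The genuine gap is the non-invertible case, which you flag but do not prove. Your sketch there does not yet make sense as written: $\sigma$ is an automorphism of the algebra $\KK^{n\times n}$, not a linear map of $\KK^n$, so the phrase ``$\sigma$ maps $\ker Y$ to $\ker(Y^*Y)$'' has no direct meaning, and the real difficulty --- choosing the otherwise arbitrary orthogonal part on the zero block so that the \emph{global} matrix $\Omega$ satisfies $\sigma(\Omega)=\Omega^*$, i.e.\ lies in the submanifold $N$ --- is precisely what is left undone. This is not mere bookkeeping: one must solve an extension problem for partial isometries inside $N$, and it is to avoid exactly this that the paper switches to a different, global argument. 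Namely, it first shows (Proposition \ref{MINIMO}) that $A$ is a global maximum of $h_Y^G$ if and only if $Y=(YA)A^*$ is a true polar decomposition; the invertible case then says that for invertible $Y\in\VV=\{Y\colon\sigma(Y)=Y^*\}$ the unique maximizer lies in $N$; and a density-plus-compactness argument (Lemma \ref{DENSO}, using that invertible matrices $Y+\varepsilon I$ are dense in $\VV$ and that $N$ is compact) transports this to arbitrary $Y\in\VV$. Once a maximizer $A=\Omega^*\in N$ is found, both identities $\sigma(\Omega)=\Omega^*$ and $\sigma(S)=\Omega^*S\Omega$ follow formally, as in your invertible computation. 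Either adopt such a limiting argument or actually carry out the block extension; as it stands your proposal proves the theorem only for invertible $Y$.
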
 

\begin{proof}
{First, let us assume that $Y$ is invertible}. Then the polar decompositions $Y=S\Omega$ (left) and $Y=\Omega(\Omega^* S \Omega)$ (right) are unique. Clearly the matrix $\sigma(\Omega)$ is orthogonal, while $\sigma(S)$ is Hermitian. Moreover $\sigma(S)=\sigma(U)\sigma(D)\sigma(U)^*$, hence it is \hps by the assumption.  Then $Y=\sigma(Y)^*=\sigma(\Omega)^*\sigma(S)$ is a (right) polar decomposition, so by uniqueness $\sigma(\Omega)=\Omega^*$ and $\sigma(S)=\Omega^* S \Omega$.

We now deal with the non-invertible case.
Let $\VV$ be the real vector space 
$$\VV=\{Y\in \MK \colon \sigma(Y)=Y^*\}.$$
Then $\VV\cap G$ is the manifold $N$ defined in Proposition \ref{COMPONENTE}. Let $\delta\colon \VV \times G \to \R$ be the continuous function
$$\delta(Y,A)=\Re\Tr(Y A).$$
According to the first part of the proof and to Proposition \ref{MINIMO}, we know that for each invertible matrix $Y$ the function $\delta(Y,-)\colon G \to \R$ has a unique global maximum $\Omega$ which belongs to $N$. Then from Lemma \ref{DENSO} below it follows  that all functions $\delta(Y,-)$, $Y\in\VV$, have at least one maximum in $N$. But maxima correspond to polar decompositions by Proposition \ref{MINIMO}, so the result follows.

\end{proof}

\begin{lemma}\label{DENSO}Let $\II\subset \VV$ be the dense subpace of invertible matrices. Assume that each map $\delta(Y,-)$, with $Y\in\II$,  has a unique global maximum which belongs to $N$. Then all maps $\delta(Y,-)$, with $Y\in\VV$, have at least one maximum which belongs to $N$.\end{lemma}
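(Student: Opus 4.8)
The plan is to argue by a compactness/limit argument, exploiting that $G$ is compact and that $\II$ is dense in $\VV$. Fix $Y\in\VV$ and choose a sequence $Y_m\in\II$ with $Y_m\to Y$. By hypothesis, for each $m$ the function $\delta(Y_m,-)\colon G\to\R$ attains its unique global maximum at a point $\Omega_m\in N$. Since $G$ is compact, after passing to a subsequence we may assume $\Omega_m\to\Omega\in G$; moreover $N$ is closed in $G$ (it is defined by the closed condition $\sigma(B)=B^{-1}$), so $\Omega\in N$. The claim is that $\Omega$ is a global maximum of $\delta(Y,-)$.

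To see this, first note that $\delta$ is continuous on $\VV\times G$ (it is $\Re\Tr$ of a bilinear expression), and $G$ is compact, so $\delta$ is uniformly continuous on $K\times G$ for any compact $K\subset\VV$ containing the $Y_m$ and $Y$; in particular $\delta(Y_m,A)\to\delta(Y,A)$ uniformly in $A\in G$. Now take any $B\in G$. For each $m$ we have $\delta(Y_m,\Omega_m)\geq\delta(Y_m,B)$. Letting $m\to\infty$, the left-hand side converges to $\delta(Y,\Omega)$ (using $\Omega_m\to\Omega$, $Y_m\to Y$ and joint continuity of $\delta$), while the right-hand side converges to $\delta(Y,B)$. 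Hence $\delta(Y,\Omega)\geq\delta(Y,B)$ for all $B\in G$, i.e.\ $\Omega\in N$ is a global maximum of $\delta(Y,-)$, as required.

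The step that needs the most care is the passage to the limit in $\delta(Y_m,\Omega_m)\to\delta(Y,\Omega)$: one must use that \emph{both} arguments vary, so joint continuity of $\delta$ (or, equivalently, the uniform convergence $\delta(Y_m,\cdot)\to\delta(Y,\cdot)$ together with $\delta(Y,\cdot)$ continuous and $\Omega_m\to\Omega$) is what makes the argument work. Note that the uniqueness hypothesis on the maxima for $Y\in\II$ is not actually needed for this conclusion — existence of maxima in $N$ for the dense set suffices — but it is harmless to keep it, since it is what the preceding application (Proposition \ref{MINIMO} for invertible $Y$) delivers. No control on the limit of the maximum \emph{value} beyond continuity is needed, and in particular we do not claim uniqueness of the maximum for the limiting $Y$, consistent with the non-uniqueness of the orthogonal factor $\Omega$ when $Y$ is singular.
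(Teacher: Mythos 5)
Your proof is correct and follows essentially the same route as the paper: approximate $Y$ by invertible matrices, extract a convergent subsequence of the corresponding maximizers in the compact set $N$, and pass to the limit using continuity of $\delta$. Your version is in fact slightly more streamlined, since comparing $\delta(Y_m,\Omega_m)\geq\delta(Y_m,B)$ for arbitrary $B\in G$ avoids the paper's auxiliary choice of a global maximum $A_0$ of $\delta(Y,-)$ on $G$.
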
 
\begin{proof}
The invertible matrices form a dense subset in $\VV$, because for any $Y\in\VV$ it is
$$\sigma(Y+\varepsilon I)=\sigma(Y)+\varepsilon \sigma(I)=(Y+\varepsilon I)^*.$$

Let $Y_0$ be a non invertible matrix in $\VV$. Let $A_0\in G$ be any global maximum of the function $\delta(Y_0,-)\colon G \to \R$ and let $t_0=\delta(Y_0,A_0)$. Then there exists a sequence $(Y_n,A_n)$ converging to $(Y_0,A_0)$, where  
$Y_n\in\VV$ is invertible and $A_n\in G$, such that $t_n=\delta(Y_n,A_n)$ converges to $t_0$. Let $N_n\in G$ be the sole global maximum of $\delta(Y_n,-)$. Since $N$ is compact, there exists a subsequence $(N_n)$ which converges to some $N_0\in N$. Then
taking limits in $$\delta(Y_n,A_n)\leq  \delta(Y_n,N_n)$$
we obtain that
$$t_0= \delta(Y_0,A_0)\leq \delta(Y_0,N_0)$$
hence $N_0\in N$corresponds to the maximum $t_0$.
\end{proof}


\begin{corollary} Under the assumption, there exists a right polar decomposition $Y= \Omega S^\prime$ such that $\sigma(\Omega)=\Omega^*$ and $\sigma(S^\prime)=\Omega S^\prime \Omega^*$.
\end{corollary}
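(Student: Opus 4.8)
The plan is to deduce this directly from Theorem \ref{POLARSIM}, treating it as a formal rewriting of the adapted \emph{left} polar decomposition in terms of the \emph{right} polar factor. Theorem \ref{POLARSIM} provides a left polar decomposition $Y=S\Omega$ with $\sigma(\Omega)=\Omega^*$ and $\sigma(S)=\Omega^* S \Omega$. As recalled in the subsection on singular value decompositions and polar forms, the right polar decomposition associated to this left one is $Y=\Omega S^\prime$ with $S^\prime=\Omega^* S \Omega$ the unique \hps square root of $Y^*Y$. So I would simply take the very same orthogonal matrix $\Omega$ together with $S^\prime=\Omega^* S\Omega$ and verify the two asserted identities.

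The first identity, $\sigma(\Omega)=\Omega^*$, is already part of the conclusion of Theorem \ref{POLARSIM}, so nothing is needed there. For the second, I would compute $\sigma(S^\prime)$ using that $\sigma$ is an automorphism of unital algebras satisfying $\sigma(Z^*)=\sigma(Z)^*$:
\[
\sigma(S^\prime)=\sigma(\Omega^* S \Omega)=\sigma(\Omega)^*\,\sigma(S)\,\sigma(\Omega)=\Omega\,\sigma(S)\,\Omega^*=\Omega(\Omega^* S \Omega)\Omega^*=S,
\]
where the third equality uses $\sigma(\Omega)=\Omega^*$ and the fourth uses $\sigma(S)=\Omega^* S \Omega$. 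On the other hand $\Omega S^\prime \Omega^*=\Omega(\Omega^* S \Omega)\Omega^*=S$ as well, whence $\sigma(S^\prime)=\Omega S^\prime \Omega^*$, which is exactly what is claimed.

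There is essentially no obstacle here: the corollary is a bookkeeping exercise once Theorem \ref{POLARSIM} is in hand, and the only point requiring a little care is the correct handling of adjoints under $\sigma$, which is guaranteed by the standing Assumption that $\sigma(Z^*)=\sigma(Z)^*$ and that $\sigma$ is multiplicative. In particular no new appeal to density of invertible matrices or to Proposition \ref{MINIMO} is needed, since all the analytic content is already absorbed into the statement of Theorem \ref{POLARSIM}.
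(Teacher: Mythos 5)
Your proof is correct, and it is the natural argument: the paper states this corollary without proof as an immediate consequence of Theorem \ref{POLARSIM}, and the passage from the adapted left polar factorization $Y=S\Omega$ to the right one $Y=\Omega S'$ with $S'=\Omega^* S\Omega$, followed by the two-line verification of $\sigma(S')=S=\Omega S'\Omega^*$ using multiplicativity of $\sigma$, the relation $\sigma(Z^*)=\sigma(Z)^*$, and $\Omega\Omega^*=I$, is exactly the intended derivation.
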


\begin{corollary}[adapted SVD]\label{ADAPT}  There exists a singular value decomposition $Y=UDV^*$ such that
the matrix $\Theta=U^*\sigma(V)$ verifies $\sigma(\Theta)=\Theta^*$.
\end{corollary}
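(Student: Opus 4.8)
The plan is to derive the adapted SVD directly from the adapted polar decomposition of Theorem~\ref{POLARSIM} by using the standard passage between polar forms and singular value decompositions described in the subsection above. Starting point: by Theorem~\ref{POLARSIM} there is a left polar decomposition $Y=S\Omega$ with $\sigma(\Omega)=\Omega^*$ and $\sigma(S)=\Omega^*S\Omega$. The Hermitian positive-semidefinite matrix $S$ can be diagonalized, say $S=UDU^*$ with $U$ orthogonal and $D$ the diagonal matrix of singular values (in the prescribed block order). Set $V=\Omega^*U$; then $Y=S\Omega=UDU^*\Omega=UD(\Omega^*U)^*=UDV^*$ is a singular value decomposition of $Y$. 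It remains to check that $\Theta\defeq U^*\sigma(V)$ satisfies $\sigma(\Theta)=\Theta^*$.

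For that I would compute $\Theta$ in terms of $U$ and $\Omega$. From $V=\Omega^*U$ we get $\sigma(V)=\sigma(\Omega^*)\sigma(U)=\sigma(\Omega)^*\sigma(U)=\Omega\sigma(U)$, using the assumption $\sigma(A^*)=\sigma(A)^*$ and $\sigma(\Omega)=\Omega^*$. Hence $\Theta=U^*\Omega\,\sigma(U)$. Applying $\sigma$ and using that it preserves products and commutes with $*$, $\sigma(\Theta)=\sigma(U)^*\,\sigma(\Omega)\,\sigma^2(U)=\sigma(U)^*\,\Omega^*\,U$, since $\sigma$ is an involution. On the other hand $\Theta^*=(U^*\Omega\,\sigma(U))^*=\sigma(U)^*\,\Omega^*\,U$. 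The two expressions coincide, so $\sigma(\Theta)=\Theta^*$, which is exactly the claim.

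I expect no serious obstacle here: the corollary is essentially a bookkeeping consequence of Theorem~\ref{POLARSIM}, and the only points requiring a little care are (a) invoking the Assumption on $\sigma(D)$ only implicitly through Theorem~\ref{POLARSIM}, so that the hypotheses are inherited correctly, and (b) making sure the diagonalization $S=UDU^*$ uses the same ordered block matrix $D$ as in \eqref{DIAG}, so that ``$D$ is the matrix of singular values'' is literally true; this is guaranteed because $S=(YY^*)^{1/2}$ and the eigenvalues of $S$ are exactly the singular values $0,t_1,\dots,t_k$ with the stated multiplicities, and the block order can be arranged by conjugating $U$ with a real permutation matrix (which does not affect orthogonality). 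One should also remark, as in the earlier Remark for the real case, that in the case $\KK=\R$ the matrix $U$ (equivalently $\Omega$) may have determinant $-1$, so the decomposition lives in $O(n)$ rather than $SO(n)$; this causes no problem for the statement, which only asserts existence of the SVD and the identity $\sigma(\Theta)=\Theta^*$.
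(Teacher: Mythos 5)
Your proof is correct and follows essentially the same route as the paper: both pass from the adapted polar decomposition $Y=S\Omega$ of Theorem~\ref{POLARSIM} to the SVD $Y=UDV^*$ by diagonalizing $S=UDU^*$ and setting $V=\Omega^*U$, then check the relation on $\Theta$. The paper simply records $\sigma(UV^*)=\sigma(\Omega)=\Omega^*=VU^*$ and leaves the equivalence with $\sigma(\Theta)=\Theta^*$ implicit, whereas you carry out the explicit computation $\Theta=U^*\Omega\,\sigma(U)$ and verify the identity directly, which is a minor difference of detail rather than of method.
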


\begin{proof}Let us take the polar decomposition $Y=S\Omega$ given by Theorem \ref{POLARSIM} and the corresponding singular value decomposition $Y=UDV^*$.
Then  
$\sigma(UV^*)=\sigma(\Omega)=\Omega^*=VU^*$.
\end{proof}

\section{Height functions associated to real diagonal matrices}\label{DIAGONAL}
In this section we show that, under mild hypothesis,  the study of any height function $h^M_X$ on the symmetric space $M$ can be reduced to the particular case $h^{M^{\prime}}_D$ where $D$ is a real non-negative diagonal matrix and $M^\prime\subset G$ is a symmetric space diffeomorphic to $M$. 

\subsection{Reduction to the diagonal case}
We begin with the following Lemma, which is a straightforward exercise.

\begin{lemma}\label{DESPLAZ} Let $\Theta$ be an orthogonal matrix such that $\sigma(\Theta)=\Theta^*$. Let $\sigma^\prime\colon G \to G$ be the involutive automorphism $\sigma^\prime(X)=\Theta \sigma(X)\Theta^*$ and let $K^\prime$ be its fixed point subgroup. Then the manifolds $N,N^\prime$ associated to $\sigma,\sigma^\prime$ respectively by formula (\ref{ENE}) verify that $N^\prime=\Theta N$.
\end{lemma}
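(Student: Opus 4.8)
The plan is to prove the set equality $N' = \Theta N$ directly, chasing the defining conditions. Recall that $N = \{B \in G : \sigma(B) = B^{-1}\}$ and $N' = \{B \in G : \sigma'(B) = B^{-1}\}$, where $\sigma'(X) = \Theta\sigma(X)\Theta^*$. First I would observe that left multiplication by the orthogonal matrix $\Theta$ is a bijection of $G$ onto itself (since $\Theta \in G$, using the hypothesis $\sigma(\Theta)=\Theta^*$ which forces $\Theta\Theta^* = \sigma(\Theta^*)\Theta^*$—more simply, $\Theta$ is orthogonal by hypothesis and we may assume it lies in $G$, as in the preceding Remark for the real case). So it suffices to show that for $B \in G$, one has $B \in N$ if and only if $\Theta B \in N'$.

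The computation is short. Write $C = \Theta B$. Then $\sigma'(C) = \Theta\sigma(C)\Theta^* = \Theta\sigma(\Theta B)\Theta^* = \Theta\sigma(\Theta)\sigma(B)\Theta^* = \Theta\Theta^*\sigma(B)\Theta^* = \sigma(B)\Theta^*$, using $\sigma(\Theta)=\Theta^*$ and that $\sigma$ respects products. On the other hand $C^{-1} = (\Theta B)^{-1} = B^{-1}\Theta^{-1} = B^{-1}\Theta^*$, using $\Theta \Theta^* = I$. Hence $\sigma'(C) = C^{-1}$ is equivalent to $\sigma(B)\Theta^* = B^{-1}\Theta^*$, and since $\Theta^*$ is invertible this is equivalent to $\sigma(B) = B^{-1}$, i.e. to $B \in N$. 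Therefore $C = \Theta B \in N'$ iff $B \in N$, which gives $N' = \Theta N$. One should also note that $\sigma'$ is indeed an involutive automorphism of $G$: it is clearly an automorphism of the algebra (composition of $\sigma$ with conjugation by $\Theta$), and $\sigma'(\sigma'(X)) = \Theta\sigma(\Theta\sigma(X)\Theta^*)\Theta^* = \Theta\sigma(\Theta)\sigma(\sigma(X))\sigma(\Theta^*)\Theta^* = \Theta\Theta^* X \Theta\Theta^* = X$, again using $\sigma(\Theta)=\Theta^*$, so $\sigma(\Theta^*) = \sigma(\Theta)^* = \Theta$; this is why the hypothesis on $\Theta$ is exactly what is needed.

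Since the statement is flagged as a straightforward exercise, there is no real obstacle; the only point requiring a little care is the bookkeeping with $\sigma(\Theta) = \Theta^*$ versus $\sigma(\Theta^*) = \Theta$ (which follows from the standing assumption $\sigma(X^*) = \sigma(X)^*$) so that both $\sigma'$ being an involution and the set equality come out cleanly. I would present the argument in the two displays above and conclude $N' = \Theta N$.
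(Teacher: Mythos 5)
Your argument is correct, and it is exactly the direct verification the authors have in mind when they call the lemma ``a straightforward exercise'' without supplying a proof: conjugating the defining condition of $N'$ by $\Theta$, using $\sigma(\Theta)=\Theta^*$ and multiplicativity of $\sigma$, reduces it to the defining condition of $N$. The small aside ``$\sigma(\Theta)=\Theta^*$ which forces $\Theta\Theta^*=\sigma(\Theta^*)\Theta^*$'' is a non-sequitur and can simply be deleted---orthogonality of $\Theta$ is an explicit hypothesis, and membership of $\Theta$ in $G$ is handled by the paper's convention of working in $O(n)$ in the real case.
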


Let $h_X^M \colon M \to \R$ be a height function on the symmetric space $M=G/K$ and let 
$\HX=X^*+\sigma(X)$.  Let $\HX=UDV^*$ be an adapted singular value decomposition as in the preceding Section (recall that we are assuming that $\sigma(D)$ is positive semi-definite).
Then, from 
$$\sigma(V)\sigma(D)\sigma(V)^*=\sigma(S)=\Omega S \Omega^*=UDU^*$$ it follows that $\sigma(D)=\Theta^* D\Theta$. Now, from Lemma \ref{DESPLAZ} we have that $\sigma^\prime(D)=D$, which implies that the matrix $\HD\defeq D^*+\sigma^\prime(D)$ equals $2D$, while $\sigma^\prime(\HD)=2D$.

Let $M^\prime$ be the symmetric space associated to $\sigma^\prime$. Hence (Proposition \ref{gradienteES}) the gradient of the height function $h_D^{M^\prime}\colon M^\prime \to \R$ at the point $B\in M^\prime$ is
\begin{equation}\label{GRADIAG}
(\grad h_D^{M^\prime})_B=\frac{1}{2}(D-BDB).
\end{equation}
 
 \begin{proposition}\label{RelCrit}
Assume $M=N$. Then the point $A\in M$ is a critical point of $h_X^M$ if and only if $U^*AV\in M^\prime$ is a critical point of $h_D^{M^\prime}$, where  ${\HX}=UDV^*$ is an adapted SVD. 
\end{proposition}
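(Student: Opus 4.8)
The plan is to translate the critical point equation for $h_X^M$ at $A$ into the critical point equation for $h_D^{M'}$ at $B = U^*AV$, using the adapted SVD $\HX = UDV^*$ and the relation $\sigma(D) = \Theta^* D \Theta$ established just before the statement, where $\Theta = U^*\sigma(V)$ satisfies $\sigma(\Theta) = \Theta^*$. First I would recall from the Corollary after Proposition \ref{gradienteES} that, since $M = N$, a point $A \in M$ is critical for $h_X^M$ exactly when $\HX = A\sigma(\HX)A$, and likewise $B \in M'$ is critical for $h_D^{M'}$ exactly when $\HD' = B\sigma'(\HD')B$, i.e.\ (using $\HD' = 2D$ and $\sigma'(D) = D$) when $D = BDB$. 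So the content of the proposition is the equivalence
\[
\HX = A\sigma(\HX)A \iff D = (U^*AV)\,D\,(U^*AV),
\]
together with the fact that $A \in M \iff U^*AV \in M'$, which should be handled first as a membership check.

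For the membership part, I would use Lemma \ref{DESPLAZ}: $N' = \Theta N$. Since $M = N$ by hypothesis and $M'$ is the identity component of $N'$, it suffices to check that $A \in N \iff U^*AV \in N'$, i.e.\ $U^*AV = \Theta A'$ for some $A' \in N$ iff $A \in N$; concretely, writing $V = \sigma(U)\Theta^*$ is not quite right, so instead I would note $\sigma(V) = U\Theta$ hence $V = \sigma(U)\sigma(\Theta) = \sigma(U)\Theta^*$, and then compute directly that $\sigma(U^*AV) = \sigma(U)^*\sigma(A)\sigma(V) = \sigma(U)^* A^{-1} U\Theta$ while $(U^*AV)^{-1} = V^* A^{-1} U = \Theta \sigma(U)^* A^{-1} U$; comparing these shows the condition $\sigma(U^*AV) = (U^*AV)^{-1}$ is equivalent to $\sigma(A) = A^{-1}$ after conjugating by $\Theta$, using $\sigma(\Theta) = \Theta^*$. (One also checks the orthogonality $U^*AV \in G$, which is immediate.)

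For the critical-equation part, I would start from $\HX = A\sigma(\HX)A$, substitute $\HX = UDV^*$, and use $\sigma(\HX) = \HX^* = VDU^*$ together with $\sigma(\HX) = \sigma(U)\sigma(D)\sigma(V)^*$ — or more efficiently just work with $\sigma(\HX) = VDU^*$ directly. The equation becomes $UDV^* = A\,VDU^*\,A$, i.e.\ $D = (U^*AV)\,D\,(V^*AU)$. The remaining point is that $V^*AU$ should be rewritten in terms of $B = U^*AV$: since $A \in N$ we have $A^{-1} = \sigma(A)$, and combining with the relations among $U, V, \Theta$ one gets $V^*AU = $ (something conjugate to $B$ by $\Theta$ and its inverse), and then the hypothesis $\sigma'(D) = D$, i.e.\ $\Theta \sigma(D)\Theta^* = D$ hence $\Theta^* D \Theta = \sigma(D)$, is exactly what is needed to absorb the $\Theta$'s and land on $D = BDB$. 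The main obstacle I expect is precisely this bookkeeping: getting $V^*AU$ expressed correctly and verifying that the $\Theta$-conjugations cancel using $\sigma(\Theta)=\Theta^*$ and $\sigma(D)=\Theta^* D\Theta$ — it is purely algebraic but requires care because $\sigma$ need not commute with arbitrary products unless one tracks that it is an algebra automorphism with $\sigma(A^*) = \sigma(A)^*$. Finally I would note that the argument is reversible at each step, giving the ``if and only if,'' and that one should either invoke the corresponding gradient formula (\ref{GRADIAG}) to phrase criticality, or equivalently the critical equations above; I would present it via the critical equations since they are cleanest.
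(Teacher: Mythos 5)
Your overall plan --- translate the critical equations through the adapted SVD, check that $U^*AV$ lands in $N'$ via Lemma~\ref{DESPLAZ}, and use $M = N \Rightarrow M' = N'$ --- is essentially the paper's approach. The paper phrases it by showing $(\grad h_X^M)_A$ equals, up to a constant, $U\big((\grad h_D^{M'})_{U^*AV}\big)V^*$; this encodes exactly the equivalence of critical equations you are after.

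There are, however, two concrete errors in the execution. First, a computational slip in the critical-equation part: starting from $UDV^* = A(VDU^*)A$ and multiplying on the left by $U^*$ and on the right by $V$ gives
\[
D \;=\; (U^*AV)\,D\,(U^*AV),
\]
not $D = (U^*AV)D(V^*AU)$ as you wrote. The equation therefore already closes directly into $D = BDB$ with $B = U^*AV$; the ``main obstacle'' you anticipate --- rewriting $V^*AU$ in terms of $B$ and then absorbing $\Theta$-conjugations --- is a phantom created by this slip. The facts $\sigma(\Theta) = \Theta^*$ and $\sigma(D) = \Theta^*D\Theta$ are used \emph{before} the proposition, to establish $\sigma'(D) = D$ and hence $\widehat{D}' = 2D = \sigma'(\widehat{D}')$; they play no role in massaging the critical equation itself.

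Second, the membership check is stated for the wrong automorphism. You compare $\sigma(U^*AV)$ with $(U^*AV)^{-1}$, but that is the defining condition of $N$, not $N'$. What you need is $\sigma'(U^*AV) = (U^*AV)^{-1}$, i.e.\ $\Theta\sigma(U^*AV)\Theta^* = (U^*AV)^{-1}$; the $\Theta$-conjugation is built into $\sigma'$ and is exactly what reconciles the two expressions you computed. Concretely, using $\Theta = V^*\sigma(U) = U^*\sigma(V)$ one finds $\sigma'(U^*AV) = V^*\sigma(A)U = V^*A^*U = (U^*AV)^*$, and since $U^*AV$ is unitary this equals $(U^*AV)^{-1}$, so $U^*AV \in N'$. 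Finally, you should say explicitly why $M = N$ forces $M' = N'$ rather than merely $M'$ being a component of $N'$; the paper notes this follows from $\Theta \in M$.
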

\begin{proof}
In fact,  by Proposition \ref{gradienteES} and formula (\ref{GRADIAG}) we have that
$$(\grad h_X^M)_A=2U((\grad h_D^{M^\prime})_{U^*AV})V^*$$
because $V^*\sigma(U)=\Theta=U^*\sigma(V)$.
Now, $U^*AV\in N^\prime$ because
$$\sigma^\prime(U^*AV)=V^*\sigma(U)\sigma(U^*AV)\sigma(V)^*U=V^*A^*U=(U^*AV)^*.$$ 
Finally, $M=N$ implies   $\Theta\in M$ and $M^\prime=N^\prime$. \end{proof}

Analogously we have for the Hessian that
$$(Hh_X^M)_A(W)=2U\left( (Hh_D^{M^\prime})_{U^*AV}(U^*WV)\right)V^*.$$

\subsection{Description of the critical set}

According to the previous section we can assume that $X=D$ is a diagonal matrix like in (\ref{DIAG}), and that $\sigma(D)=D$, hence $\widehat{D}=2D$.
\begin{lemma}
Let $A$ be a critical point of $h_D^G$, that is, $D=ADA$.  Then $A$ can be decomposed into boxes, of size $n_0,n_1,\dots,n_k$ respectively,
$$A=\left(\begin{array}{cccc}
\vertmat{A_0}&&&\bigzero\\ \cline{1-1}
&\vertmat{A_1}&&\\ \cline{2-2}
\bigzero&&\ddots&\\
&&&\vertmat{A_k}\\ \cline{4-4}
\end{array}\,\,\right)$$
 such that $A_0A_0^*=I$ and $A_i^2=I$, $A_i=A_i^*$, for $1\leq i \leq k$. 
\end{lemma}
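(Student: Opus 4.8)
The plan is to work blockwise, exploiting that both $D$ and the critical equation $D=ADA$ respect the eigenspace decomposition of $DD^*$, and then to identify each diagonal block separately.

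First I would show that $A$ is block diagonal with respect to the partition $n_0,n_1,\dots,n_k$. Write $A=(A_{ij})$ in this block structure and expand $D=ADA$. Since $D$ is block scalar, with block $t_i I_{n_i}$ on the $i$-th block (and $0_{n_0}$ on the zeroth), the $(i,j)$ block of $ADA$ is $t_i t_j$ (or zero if $i$ or $j$ is $0$) times $\sum$-free, namely $t_i A_{ij} t_j$ after noting that left-multiplication by $D$ scales the $i$-th block-row by $t_i$ and right-multiplication by $D$ scales the $j$-th block-column by $t_j$. Hence the equation $D=ADA$ reads, block by block: for $i=j\geq 1$, $t_i I_{n_i}=t_i^2 A_{ii}^2$, i.e. $A_{ii}^2=I$ (after dividing by $t_i\neq 0$); for $i\neq j$ with $i,j\geq 1$, $0=t_i t_j A_{ij}$, forcing $A_{ij}=0$; and any block touching the zeroth index gives $0=0$ and no constraint from this equation. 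So off-diagonal blocks between nonzero-singular-value blocks vanish.

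Next I need to kill the remaining off-diagonal blocks, namely those in the zeroth block-row and block-column (coupling $A_0$ to the $A_i$). Here the critical equation alone is insufficient; I would invoke orthogonality $AA^*=I$ together with the constraint that $A\in M$ forces $A$ to be, up to the already-established vanishing, genuinely block diagonal. Concretely, having shown $A_{ij}=0$ for $1\le i\ne j\le k$, the matrix $A$ has the shape $\left(\begin{smallmatrix} A_0 & R \\ C & \mathrm{diag}(A_1,\dots,A_k)\end{smallmatrix}\right)$ where $R$ is the zeroth block-row remainder and $C$ the zeroth block-column remainder (in the $K$-case also $\sigma(A)=A^{-1}$, but for the stated lemma $G$ suffices). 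I would argue: the critical point $A$, being a critical point of $h_D^G$, satisfies $DA=AD$ as well — indeed $D=ADA$ and $AA^*=I$ give $DA^* = A^{-1}(AD A)A^* \cdot$... more cleanly, from $D=ADA$ and $A$ orthogonal one gets $DA^{-1}=AD$, i.e. $DA^*=AD$, so taking adjoints $AD=D A^*$ again; combined with $D=ADA$ this yields $D A^* = A D = D^{-1}\cdot$ hmm. The cleanest route: $D=ADA \Rightarrow ADA A^* = D A^* \Rightarrow ADI\cdot$ wait $A A^*=I$ so $AD = DA^*$. Similarly from $D=ADA$, $A^* D = DA \cdot$. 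Then $AD=DA^*$ and $A^*D=DA$ together give $A D A = D$ consistently and moreover $AD\,=\,DA^*$, apply again: $A(AD)=A D A^* = D A^* A^*$? This is getting circular; the honest statement is that $AD=DA^*$, and since $D$ is real diagonal with the zeroth block zero, comparing the $(0,i)$ and $(i,0)$ blocks of $AD=DA^*$ gives $R\cdot t_i I = 0$ and $0 = t_i C^*$, forcing $R=0$ and $C=0$. Thus $A$ is block diagonal, $A=\mathrm{diag}(A_0,A_1,\dots,A_k)$.

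Finally, with $A$ block diagonal, orthogonality $AA^*=I$ restricted to each block gives $A_iA_i^*=I$ for all $i$, in particular $A_0A_0^*=I$; and the relation $A_{ii}^2=I$ from the first step gives $A_i^2=I$ for $1\le i\le k$. Combining $A_i^2=I$ with $A_iA_i^*=I$ yields $A_i=A_i^{-1}=A_i^*$, so each $A_i$ ($i\ge1$) is Hermitian and involutive, as claimed. The one step I expect to be slightly delicate is the vanishing of the zeroth block-row/column, $R=C=0$: it needs the extra relation $AD=DA^*$ (equivalently $DA^*=AD$), which I would derive by right-multiplying $D=ADA$ by $A^*$ and using $AA^*=I$. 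Everything else is bookkeeping with block multiplication and division by the nonzero scalars $t_i$.
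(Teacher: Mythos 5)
There are two genuine gaps in your argument, both stemming from block-multiplication errors, though the underlying idea is salvageable and (once fixed) actually yields a cleaner proof than the paper's.

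\textbf{First gap.} Your opening step asserts that the $(i,j)$ block of $ADA$ is $t_i A_{ij} t_j$, ``since left-multiplication by $D$ scales block-rows and right-multiplication scales block-columns.'' But that describes $DAD$, not $ADA$: here the $A$'s are on the \emph{outside}, so the correct expression is $(ADA)_{ij}=\sum_{p} t_p\, A_{ip}A_{pj}$, which is not a single term. Consequently the block equation $D=ADA$ does \emph{not} directly read $t_iI=t_i^2A_{ii}^2$ and $0=t_it_jA_{ij}$, and your first conclusion ($A_{ij}=0$ for $1\le i\neq j\le k$ and $A_{ii}^2=I$) is not established at this stage.

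\textbf{Second gap.} In handling the zeroth block you correctly derive $AD=DA^*$, but then claim that the $(i,0)$ block gives $0=t_iC^*$. In fact $(AD)_{i0}=A_{i0}t_0=0$ while $(DA^*)_{i0}=t_iA_{0i}^*$, so the $(i,0)$ block of $AD=DA^*$ only re-proves $A_{0i}=0$ --- the zeroth \emph{row} again, not the zeroth column. You wrote down the missing ingredient --- $A^*D=DA$, obtained by multiplying $D=ADA$ on the \emph{left} by $A^*$ --- and then discarded it as ``circular,'' but it is \emph{not} the adjoint of $AD=DA^*$ (both equations are self-adjoint), and it is exactly what kills the column: $(A^*D)_{i0}=A_{0i}^*t_0=0$ while $(DA)_{i0}=t_iA_{i0}$, forcing $A_{i0}=0$.

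\textbf{The fix (and a comparison with the paper).} Use \emph{both} relations $AD=DA^*$ and $A^*D=DA$ throughout. Blockwise they read $t_jA_{ij}=t_iA_{ji}^*$ and $t_iA_{ij}=t_jA_{ji}^*$. For $i\neq j$ (so $t_i^2\neq t_j^2$) this linear system in $(A_{ij},A_{ji}^*)$ is nonsingular, so $A_{ij}=0$; for $i=j\ge1$ it gives $A_{ii}=A_{ii}^*$. Block diagonality is then immediate, and the endgame you wrote (orthogonality block-by-block giving $A_0A_0^*=I$ and $A_iA_i^*=I$; then $D=ADA$ on the diagonal giving $A_i^2=I$; hence $A_i=A_i^{-1}=A_i^*$) is fine. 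This route is more direct than the paper's, which uses only $DA^*=AD$ (giving $A_{0j}=0$, $A_{ii}$ Hermitian, and $t_iA_{ji}^*=t_jA_{ij}$), then $AA^*=I=A^*A$ in the $(0,0)$ block to kill the zeroth column, and finally a positive-semidefinite sum argument, block-row by block-row, to kill the remaining off-diagonal blocks. Both routes ultimately rest on $D=ADA$ together with full two-sided orthogonality of $A$; as written, your proposal uses only one of the two derived commutation relations and thus cannot close.
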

\begin{proof} 
Write boxes
$$A=\left(\begin{array}{cccc}
{A_0}&A_{01}&A_{02}&\dots\\ 
A_{10}&{A_1}&A_{12}&\dots\\ 
&&\ddots&\\
&&&{A_k}\\
\end{array}\,\,\right)
$$

From the condition $DA^*=AD$, it follows that 
$A_{0j}=0$ and $A_{i}=A_{i}^*$, $i, j=1,\dots,k$. Moreover $t_iA_{ji}^*=t_jA_{ij}$ for $i<j$. From the conditions $AA^*=I=A^*A$ it follows that
$A_0A_0^*=I$ and $A_{i0}=0$, $i=1,\dots,k$. Also
$$A_{11}^2+A_{12}A_{12}^*+\cdots+A_{1k}A_{1k}^* =I = A_{11}^2+A_{21}^*A_{21}+\cdots+A_{k1}^*A_{k1}$$
that implies
$$(t_2^2-t_1^2)A_{12}A_{12}^2+\cdots+(t_k^2-t_1^2)A_{1k}A_{1k}^2=0$$
from which it follows easily that
$A_{12}=\dots=A_{1k}=0$. For the other rows the computation is similar .
\end{proof}
\begin{remark} Recall that
$\Sigma(h_D^M)=\Sigma(h_D^G)\cap M$. A precise description of the critical set of $h_D^G$ can be deduced from the previous Lemma, see \cite[p.~329]{TMP} and \cite{MPSPN}. Then $h_D^G$ is a Morse function if and only if $\dim S^M(A)=0$ (cf. Section \ref{CartaLocalCriticos}), which is equivalent to  $n_0=0$ and $n_1=\dots=n_k=1$.
\end{remark}

From Proposition \ref{GRADGRUPO} we obtain the following Corollary, ---this result appears in Brockett's paper \cite[p.~771]{BROCKETT}, where it is attributed to Shayman, see  also \cite[p.~99]{HELMKEMOORE}.

\begin{corollary} The height function $h^G_X$ in the Lie group $G$ is a Morse function if and only if the singular values of the matrix $X$ are positive and pairwise different.
\end{corollary}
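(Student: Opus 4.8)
The plan is to reduce the general height function on $G$ to one associated with a real diagonal matrix, for which the statement has essentially been established in the preceding discussion.

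First I would fix a singular value decomposition $X=UDV^*$ with $D$ the matrix of singular values as in (\ref{DIAG}). In the unitary and symplectic cases the factors $U,V$ belong to $G$; in the real case one takes $G=O(n)$, following the convention in the Remark above, so that again $U,V\in G$. Then $\psi\colon G\to G$ given by $\psi(B)=VBU^*$ is a diffeomorphism, and cyclically permuting inside the trace gives
$$h_X^G(\psi(B))=\Re\Tr\big(UDV^*\cdot VBU^*\big)=\Re\Tr(DB)=h_D^G(B),$$
that is, $h_X^G\circ\psi=h_D^G$. Being a Morse function --- all critical points non-degenerate --- is invariant under precomposition with a diffeomorphism, since at a critical point the Hessian transforms by the isomorphism $d\psi$. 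Hence $h_X^G$ is a Morse function if and only if $h_D^G$ is.

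Next I would invoke the analysis of the diagonal case. By Proposition \ref{GRADGRUPO} the critical points of $h_D^G$ are the matrices satisfying $D=ADA$, and by the Lemma proved above each of them is block diagonal, with an orthogonal block $A_0$ of size $n_0$ and Hermitian involutions $A_i=A_i^*$, $A_i^2=I$, of sizes $n_i$ for $1\le i\le k$. As recorded in the preceding Remark (and computed in \cite{TMP,MPSPN}), the space $S^M(A)$, isomorphic to $\ker H(h_D^G)_A$, is trivial at every such critical point precisely when $n_0=0$ and $n_1=\dots=n_k=1$; consequently $h_D^G$ is Morse if and only if $n_0=0$ and $n_1=\dots=n_k=1$. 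Now the singular values of $X$ are $0,t_1,\dots,t_k$ with respective multiplicities $n_0,n_1,\dots,n_k$, so this condition says exactly that they are all positive ($n_0=0$) and pairwise different (each remaining multiplicity equal to $1$). Combining the two steps proves the Corollary.

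The crux of the argument is the diagonal input: $\dim S^M(A)=0$ at every critical point of $h_D^G$ if and only if $n_0=0$ and $n_1=\dots=n_k=1$. The forward implication is the delicate one --- whenever some $n_j\ge 2$, or $n_0\ge 1$ in the unitary or symplectic setting, one must produce a critical point with positive-dimensional Hessian kernel, for instance one whose block $A_j$ has mixed signature, or one with $A_0$ ranging over the positive-dimensional group $U(n_0)$ or $Sp(n_0)$ --- while the converse requires checking non-degeneracy at the finitely many remaining critical points. All of this follows from the Lemma above together with the Hessian formula, and is summarised in the Remark we have invoked.
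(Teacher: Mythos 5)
Your proposal is correct and follows essentially the same route as the paper: reduce to the diagonal case via a singular value decomposition $X=UDV^*$ (using the cyclic invariance of $\Re\Tr$ and the convention $G=O(n)$ in the real case), then apply the block-decomposition Lemma for solutions of $D=ADA$ together with the criterion that $h_D^G$ is Morse precisely when the kernel of the Hessian vanishes at every critical point, i.e.\ $n_0=0$ and $n_1=\dots=n_k=1$. The only difference is presentational: you make the reduction explicit through the diffeomorphism $\psi(B)=VBU^*$, whereas the paper leaves it implicit in the preceding subsection.
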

 
  The following Example illustrates all the results in this Section.
\subsection{Final example}

 Let $G=Sp(2)$ and $\sigma(A)=-\I A \I$, then the fixed point subgroup is $K=U(2)$. The symmetric space $G/K=Sp(2)/U(2)$ can be identified with the manifold of complex structures on   $\HH^2$ which are compatible with the hermitian product. These are the matrices $\JJ\in Sp(2)$ such that $\JJ^2=-I$. Let 
$\JJ_0=\begin{pmatrix}
\I & 0\cr
0 &\I \cr
\end{pmatrix}$.
Then the Cartan embedding $\gamma\colon G/K \to Sp(2)$ is $\gamma(\JJ)=-\JJ\JJ_0$. As a consequence, it can be proven that the Cartan model $M$ equals the manifold $N$  of matrices such that $\sigma(A)=A^*$
(Proposition \ref{COMPONENTE}). Explicitly, it is formed by the diagonal matrices
$$\begin{pmatrix}
\alpha & 0 \cr
0 & \delta
\end{pmatrix}, \quad |\alpha| = |\delta |=1, \Re(\alpha\I)=\Re(\delta\I)=0,$$ jointly with the matrices
$$\begin{pmatrix}
\alpha & -\I\bar{\beta}\I\cr
\beta &  \beta\bar{\alpha}\I {\beta^{-1}}\I
\end{pmatrix}, \quad \beta\neq0, |\alpha|^2+|\beta|^2=1, \Re(\alpha\I)=0.$$

Let us take $X=\begin{pmatrix} x & 0 \cr 0 & y \end{pmatrix}\in \HH^{2\times 2}$ with 
  $x=1+\J$ and $y= \I+\J$.
  
First, we study the function $h_X^G$ on the Lie group $G$. Since 
$$
XX^* = 
\begin{pmatrix} |x|^2 & 0 \cr 0 & |y|^2 \end{pmatrix} = 2I,$$ we have the singular value decomposition
$X=UDV^*$ with $U=\frac{1}{\sqrt{2}}X$, $D=\diag(\sqrt{2},\sqrt{2})$ and $V=I$.
The critical set  $\Sigma(h_X^G)$ is then diffeomorphic to $\Sigma(h_D^G)$, which is a disjoint union $\Sigma(2)=G_0^2\sqcup G_1^2 \sqcup G_2^2$ of Grassmannians  \cite{TMP}, namely,  two points and $Sp(2)/(Sp(1)\times Sp(1))\cong S^4$. More precisely, from the gradient equation $D=ADA$  it follows that  the three components are $\{\pm I\}$ and the sphere 
$$\left\{\begin{pmatrix} s & \bar\beta \cr \beta & -s \end{pmatrix}\colon s\in\R, s^2+|\beta|^2=1\right\},$$
which are the orbits by the adjoint action of $I$, $-I$ and $\pm P=\pm\begin{pmatrix}1&0\cr 0&-1\end{pmatrix}$ respectively \cite{FRANKEL}. From the index of each orbit one easily obtains the critical values of the height function.

Finally we have an explicit description of the critical set of $h_X^G$ as $\Sigma(h_X^G)=V\Sigma(h_D^G)U^*$, that is the disjoint union of $\{\pm U^*\}$ and $G_1^2U^*$ (Proposition~\ref{RelCrit}, see also \cite[p.~3]{MPSPN}). It could be computed directly from the gradient equation $X^*=AXA$.
Notice that $\Sigma(h_X^G)\cap M=\emptyset$, that is, none of the critical points of $h_X^G$ is contained in the symmetric space $M\cong G/K$.
 
Now we can compute the critical points of the restriction $h_X^M$ of the height function to $M\subset G$. We shall use the formula  $\Sigma(h_X^M)=\Sigma(h_{\sigma(\HX)}^G)\cap M$. (Corollary~\ref{CRITSIM}), where $\HX=X^*+\sigma(X)$. 

We have 
$$\sigma(\HX)=\HX^*=\begin{pmatrix}  x_0 & 0 \cr 0 & y_0 \end{pmatrix},$$
where $x_0=\bar x-\I x \I=2\J$ and $y_0=\bar y -\I y \I=2+2\J$. Notice that $|x|=|y|$ but $|x_0|\neq |y_0|$.
This time we take the SVD
$\HX^*=UDV^*$ where $U=\begin{pmatrix} \J & 0 \cr 0 & \frac{1}{\sqrt{2}}(1+\J) \end{pmatrix}$, $D=\diag( 2 , 2\sqrt{2})$ and $V=I$. This is an adapted SVD (Corollary~\ref{ADAPT}), that is, the matrix $$\Theta=U^*\sigma(V)=\begin{pmatrix}  -\J & 0 \cr 0 & \frac{1}{\sqrt{2}}(1-\J) \end{pmatrix}$$ verifies $\sigma(\Theta)=\Theta^*$.
 
The critical set $\Sigma(h_D^G)$ is diffeomorphic to $\Sigma(1)\times \Sigma(1)$, that is, four points, because $\Sigma(1)=G_0^1\sqcup G_1^1$ \cite{TMP}. Explicitly,    
$$\Sigma(h_D^G)=\{A\in Sp(2)\colon DA^*=AD\}=\{\pm I, \pm P\}.$$ 
Now, $\Sigma(h_{\sigma(\HX)}^G)=V\Sigma(h_D^G)U^*=\{\pm U^*, \pm PU^*\} $, and these four points are in $M$, then  it follows that $ h_X^M$ is a Morse function with four critical points.
   
Finally, we shall verify the formula 
$\Sigma(h_X^M)=V\Sigma(h_D^{M'})U^*$, where $M^\prime=\Theta M$ (Lemma \ref{DESPLAZ});  remember that $M=N$ in the present example.
 
 According to Lemma \ref{DESPLAZ} the new automorphism  $\sigma '\colon G \to G$ is given by
$$\sigma'(A)=\Theta\sigma(A)\Theta^*=U^*(-\I A\I)U=-\I UAU^*\I$$
and we check that 
$\sigma'(D)=-\I UDU^*\I=-\I D\I=D,$ and 
$\widehat{D}^\prime =D^*+\sigma'(D)=2D$.
 Then 
$$\Sigma(h_D^{M^\prime})=\Sigma(h_{\sigma'(\widehat{D}^\prime)}^G\cap {M^\prime})=\Sigma(h_{2D}^G)\cap M^\prime.$$

We already know that $\Sigma(h_{2D}^{M^\prime})=\Sigma(h_D^{M^\prime})=\{\pm I, \pm P\}$.
On the other hand  $M'=\Theta M=U^*M$, and
$$\Sigma(h_D^G)\cap M^\prime=\{\pm I, \pm P\}.$$
Then 
$$V\Sigma(h_D^{M'})U^*=\{\pm I U^*, \pm PU^*\},$$
which equals $\Sigma(h_X^M)$ as promised.

\bigskip

\flushright
{\sc Enrique Mac\'ias Virg\'os. \\Facultade de Matem\'aticas. \\Universidade de San\-tia\-go de Compostela. SPAIN.}\\
{\tt quique.macias@usc.es}\\

{\sc Mar\'ia Jos\'e Pereira S\'aez. \\Facultade de Econom\'ia e Empresa. \\Universidade da Coru\~na. SPAIN.}\\
{\tt maria.jose.pereira@udc.es}

\end{document}